\documentclass[a4paper, fleqn]{article}

\usepackage{amsfonts}
\usepackage{bbm}
\usepackage{amsmath}
\usepackage{amssymb}
\usepackage[english]{babel}
\usepackage{amsthm}
\usepackage{tikz}
\usepackage{textgreek}
\usepackage{mathtools}
\usetikzlibrary{arrows.meta, calc, patterns}
\newtheorem {thm}{Theorem}
\newtheorem {lem}[thm]{Lemma}
\newtheorem {cor}[thm]{Corollary}
\newtheorem {conj}[thm]{Conjecture}
\newtheorem {prop}[thm]{Proposition}
\newtheorem* {que*}{Question}

\theoremstyle{remark}
\newtheorem*{rem*}{Remark}

\DeclareMathOperator{\dist}{dist}

\DeclareMathOperator{\conv}{conv}

\setlength{\topmargin}{0in} \setlength{\leftmargin}{0in}
\setlength{\rightmargin}{0in} \setlength{\evensidemargin}{0in}
\setlength{\oddsidemargin}{0in}

\setlength{\textwidth}{6.5in} \setlength{\textheight}{8.4in}

\sloppy

\title{On a conjecture by Eckhoff and Dolnikov concerning line transversals to Euclidean disks}
\author{Alexander Magazinov\thanks{Supported in part by ERC Starting Grant 678520.}}

\begin{document}

\maketitle

\begin{abstract}
Let $K$ be a convex body in the Euclidean plane $\mathbb R^2$. We say that
a point set $X \subseteq \mathbb R^2$ satsfies the property $T(K)$ if the family of translates
$\{ K + x : x \in X \}$ has a line transversal. A weaker property, $T(K, s)$, of
the set $X$ is that every subset $Y \subseteq X$ consisting of at most $s$ elements satisfies
the property $T(K)$.

The following question goes back to Gr\"unbaum: given $K$ and $s$, what is the minimal positive
number $\lambda = \lambda(K, s)$ such that every finite point set in $\mathbb R^2$ with the property $T(K, s)$
also satisfies the property $T(\lambda K)$? The constant $\lambda_{disj}(K, s)$ is defined
similarly, with the only additional assumption that the translates $x + K$ and $y + K$
are disjoint for every $x, y \in X$, $x \neq y$.

One case of particular interest is $s = 3$ and $K = B$, where $B$ is a unit Euclidean ball.
Namely, it was conjectured by Eckhoff and, independently, Dolnikov that $\lambda (B, 3) = \frac{1 + \sqrt{5}}{2}$.
 
In this paper we propose a stronger conjecture, which,
on the other hand, admits an algebraic formulation in a finite alphabet. We verify our conjecture
numerically on a sufficiently dense grid in the space of parameters and thereby obtain an estimate
$\lambda_{disj}(B, 3) \leq \lambda(B, 3) \leq 1.645$. This is an improvement on the previously known
upper bounds $\lambda(B, 3) \leq \frac{1 + \sqrt{1 + 4\sqrt{2}}}{2} \approx 1.79$ (Jer\'onimo Castro and Rold\'an-Pensado, 2011) and 
$\lambda_{disj}(B, 3) \leq 1.65$ (Heppes, 2005).
\end{abstract}

\section{Introduction}

Let $K$ be a convex body in the Euclidean plane $\mathbb R^2$. We will consider families
\begin{equation}\label{eq:translates}
  \mathcal F = \{ K + x : x \in X \} \qquad (X \subseteq \mathbb R^2)
\end{equation}
of translates of $K$ with the following property: every $s$-tuple of translates from $\mathcal F$ has a line transversal.
(Here $s > 2$ is a fixed integer number.) The following question has been proposed by Gr\"unbaum~\cite{Gru58}: given $K$
and $s$, what is the minimum value of a constant $\lambda = \lambda(K, s)$ such that for every finite family $\mathcal F$ satisfying
the above property one can guarantee that the family of blow-ups
\begin{equation*}
  \mathcal \lambda F = \{ \lambda K + x : x \in X \}
\end{equation*}
can be stabbed by a single line?

Let us introduce a convenient notation. We will rather consider point sets $X \subseteq \mathbb R^2$ instead
of the associated families of translates~\eqref{eq:translates}. If a family~\eqref{eq:translates} can be stabbed
by a single line, we say that the associated point set $X$ {\it satisfies the property $T(K)$}. If every subset of $X$,
consisting of at most $s$ elements, satisfies the property $T(K)$, then we say that $X$ {\it satisfies the property $T(K, s)$}.
Then Gr\"unbaum's question can be formulated as follows.

\begin{que*}
Given a convex body $K \subset \mathbb R^2$ and an integer $s > 2$, what is the minimum possible value $\lambda(K, s)$ such that
the following holds: every finite point set $X \subset \mathbb R^2$ with the property $T(K, s)$
necessarily satisfies the property $T(\lambda K)$?
\end{que*}

Applying additional restriction to the point set $X$ may also make sense. For instance, let us call $X$
{\it $K$-separated} if the family~\eqref{eq:translates} consists of pairwise disjoint translates. Then one can
ask an analogous question.

\begin{que*}
Given a convex body $K \subset \mathbb R^2$ and an integer $s > 2$, what is the minimum possible value $\lambda_{disj}(K, s)$ such that
the following holds: every $K$-separated finite point set $X \subset \mathbb R^2$ with the property $T(K, s)$
necessarily satisfies the property $T(\lambda K)$?
\end{que*}

The case $K = B$, $s = 3$, where
\begin{equation*}
  B = \{ (x, y) : x^2 + y^2 \leq 1 \},
\end{equation*}
i.e., $B$ is the unit ball centered at the origin, has attracted some particular attention. The following conjecture
was posed in 1969 by Eckhoff~\cite{Eck69} and, independently, in 1972 by Dolnikov (see~\cite{Jer07}).

\begin{conj}[Dolnikov, Eckhoff]\label{conj:DE}
$\lambda (B, 3) = \frac{1 + \sqrt{5}}{2}$.
\end{conj}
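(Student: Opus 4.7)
The plan splits into the matching lower bound $\lambda(B,3) \ge \varphi$ and upper bound $\lambda(B,3) \le \varphi$, where $\varphi = (1+\sqrt 5)/2$. The reformulation to keep in mind throughout is that $X$ satisfies $T(\mu B)$ if and only if there is a unit vector $u$ with $\max_{x \in X}\langle u,x\rangle - \min_{x \in X}\langle u,x\rangle \le 2\mu$; setting $W(X) := \min_{|u|=1}(\max_{x}\langle u,x\rangle - \min_{x}\langle u,x\rangle)$, property $T(B,3)$ becomes $W(Y) \le 2$ for every three-element $Y \subseteq X$, and the goal is $W(X) \le 2\varphi$.

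For the lower bound I would exhibit an explicit finite point set $X_0$ satisfying $T(B,3)$ for which $\{cB + x : x \in X_0\}$ has no line transversal whenever $c < \varphi$. The appearance of the golden ratio points to a pentagonal or golden-rectangle geometry; natural candidates are the four vertices of a golden rhombus or the five vertices of a suitably scaled regular pentagon. Verification reduces to a finite check of the $\binom{|X_0|}{3}$ triple transversals together with a one-variable minimization of the directional width of $X_0$ itself, showing the minimum equals $2\varphi$ and is attained only along prescribed symmetry directions.

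The upper bound is the main content. My strategy is an extremality argument: among all finite counterexamples $X$ to $W(X) \le 2\varphi$ with property $T(B,3)$, pick one minimizing $|X|$ and then maximizing $W(X)$. Since $W$ is monotone under inclusion, minimality in $|X|$ forces $W(X \setminus \{x\}) \le 2\varphi$ for every $x \in X$; combined with compactness and local perturbation (slide a non-extremal point until another triple inequality becomes tight), this should give a uniform bound $|X| \le 5$ and pin the optimal direction $u_\ast$ realizing $W(X)$ to one in which several points lie on the two supporting lines. The problem then reduces to a finite-dimensional optimization over at most five points subject to the $\binom{|X|}{3}$ triple inequalities of the form $W(Y) \le 2$.

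The hard part is the combinatorial case analysis hidden in this optimization. The constraint $W(Y) \le 2$ is nonsmooth because the witness direction of $Y$ can switch as $Y$ deforms, so each combinatorial type (an assignment of witness direction to each triple) yields a separate subproblem. The structural lemma I would aim to prove is that at any extremal configuration the witness direction of each critical triple is orthogonal to a chord joining two of its points, turning each triple constraint into an algebraic relation among pairwise distances. Together with the minimality reduction, this should collapse the optimization to finitely many polynomial systems whose maximal $W$ value is exactly $2\varphi$, matching the lower bound; the paper's numerical upper bound of $1.645$ is strong evidence that this structural picture is correct, but passing from numerics to a rigorous finite case analysis is precisely the step that obstructs a full proof.
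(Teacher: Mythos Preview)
The paper does not prove this statement. Conjecture~\ref{conj:DE} is explicitly declared open in the paper; what the paper establishes is (i) a reduction of Conjecture~\ref{conj:DE} to the finite-parameter Conjecture~\ref{conj:finite} (Theorem~\ref{thm:reduction}), (ii) a grid verification of Conjecture~\ref{conj:finite} (Theorem~\ref{thm:grid}), and (iii) the numerical upper bound $\lambda(B,3)\le 1.645$ (Theorem~\ref{thm:estimate}). So there is no paper proof of the full conjecture to compare your attempt against.

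On the lower bound, your sketch matches what the paper records as known: the regular pentagon with side $2/\sqrt{\tau+2}$ gives $\lambda(B,3)\ge\lambda_{disj}(B,3)\ge\varphi$.

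On the upper bound, your approach is genuinely different from the paper's. The paper reduces to configurations of at most five points via the John--Ball description of the minimum-area containing ellipse (Proposition~\ref{prop:john_ball} and Corollaries~\ref{cor:simplex}--\ref{cor:five_point_ineq}), obtaining an algebraic parametrization in $\le 7$ variables and then attacking it by computer on a grid with Lipschitz-type extrapolation (Lemmas~\ref{lem:contraction}--\ref{lem:functional_lipschitz}). Your route is a direct extremality argument on the width functional $W$, aiming to force $|X|\le 5$ by minimal-counterexample/perturbation reasoning and then to analyze the resulting constrained optimization by hand. Both approaches share the ``reduce to few points'' theme, but the mechanisms are different: the paper's bound $|Z|\le 5$ comes from Carath\'eodory-type considerations in the John representation, not from width extremality.

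Your proposal contains gaps you yourself flag. The assertion that minimality in $|X|$ plus perturbation ``should give a uniform bound $|X|\le 5$'' is not argued; nothing in your outline rules out that an extremal counterexample needs more points to saturate enough triple constraints. More seriously, the final step --- the ``structural lemma'' on witness directions and the claim that the resulting polynomial systems have maximal value exactly $2\varphi$ --- is precisely the content of the conjecture, and you give no mechanism for carrying it out. Given that the paper, with substantial computer assistance, reaches only $1.645$ rather than $\varphi\approx 1.618$, a hand proof along your lines would require a structural insight you have not yet identified.
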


Conjecture~\ref{conj:DE} remains unresolved so far (see~\cite{Eck16}). Recently, it was highlighted in the Handbook of Discrete and Convex Geometry
(see~\cite[Conjecture~4.2.25]{Hdb17}) as an important problem in the theory of geometric transversals.

Some partial results towards Conjecture~\ref{conj:DE} have been achieved.
It is known that $\lambda(B, 3) \geq \lambda_{disj}(B, 3) \geq \frac{1 + \sqrt{5}}{2}$, as implied by considering
the vertex set of a regular pentagon with each side equal to $\frac{2}{\sqrt{\tau + 2}}$, where $\tau = \frac{1 + \sqrt{5}}{2}$.
The best previously known upper bounds are $\lambda(B, 3) \leq \frac{1 + \sqrt{1 + 4\sqrt{2}}}{2} \approx 1.79$ due to Jer\'onimo Castro and Rold\'an-Pensado~\cite{JR11},
and $\lambda_{disj}(B, 3) \leq 1.65$ due to Heppes~\cite{Hep05}.

In this paper we improve the known upper bounds for both $\lambda(B, 3)$ and $\lambda_{disj}(B, 3)$. Moreover, we provide some significant evidence
indicating that our approach can resolve Conjecture~\ref{conj:DE} completely.

\section{The ``finite'' conjecture, the parametrization and the restriction to a grid}

Let us pose a conjecture, which is, apparently, stronger than Conjecture~\ref{conj:DE}.

\begin{conj}\label{conj:finite}
Let $E \subset \mathbb R^2$ be an elliptical disk, $Z \subset \partial E$ be a point set
such that $\# Z \leq 5$ and $E$ has the minimum area of all elliptic disks containing $E$.
Consider the set
\begin{equation}\label{eq:r}
R = R(Z) = \{ x \in \mathbb R^2 : \text{ $\{ x \} \cup Z$ satisfies the property $T(B, 3)$} \}.
\end{equation}
Then the set $R \cap E$ satisfies the property $T \left( \frac{1 + \sqrt{5}}{2} B \right)$.
\end{conj}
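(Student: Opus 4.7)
The overall strategy is to reduce Conjecture~\ref{conj:finite} to a finite-parameter verification. First, I would parameterize admissible pairs $(E, Z)$ modulo the natural symmetries of the problem --- translations and rotations about the origin, both inherited from the rotational symmetry of $B$. Normalizing, one may assume that $E$ is centered on the positive $x$-axis with its principal axes aligned to the coordinate axes, so the position and shape of $E$ together contribute three parameters, while the points $z_1,\ldots,z_k \in \partial E$ (with $k = \# Z \leq 5$) contribute $k$ angular parameters. The L\"owner--John optimality of $E$ is captured by the existence of nonnegative weights $\lambda_1,\ldots,\lambda_k$, not all zero, with $\sum_i \lambda_i (z_i - c)(z_i - c)^{\top}$ proportional to the shape matrix of $E$, where $c$ is the center of $E$. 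This algebraic identity cuts the dimension further, leaving a compact parameter domain of moderate dimension.

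Second, I would describe $R(Z)$ explicitly. Three points $x, z_i, z_j$ satisfy $T(B)$ precisely when they lie in a common strip of width $2$; with $z_i, z_j$ fixed, the locus of $x$ satisfying this condition is a region $R_{ij}$ whose boundary is built from circular arcs of radius $1$. Hence $R(Z) = \bigcap_{i<j} R_{ij}$ is a semi-algebraic region, and for any candidate line $\ell$ the quantity $\max_{x \in R(Z) \cap E} \dist(x, \ell)$ is an explicit algebraic function of the parameters, attained at a vertex of $\partial R(Z)$ or at a tangency between a boundary arc and $\partial E$.

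Third, for each $Z$ I would propose a canonical transversal line $\ell(Z)$ --- the major axis of $E$, or the direction minimizing the width of $R(Z) \cap E$ --- and verify
\[
\max_{x \in R(Z)\cap E} \dist\bigl(x,\ell(Z)\bigr) \leq \frac{1+\sqrt 5}{2}
\]
as a pointwise inequality over the parameter space. Establishing a Lipschitz estimate for the left-hand side and combining it with a sufficiently fine grid check would, in principle, upgrade the numerical verification recorded in the paper to a rigorous argument.

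The principal obstacle is obtaining sharp enough Lipschitz constants. The numerical sweep only yields the weaker bound $1.645$, so closing the gap to the sharp constant $\frac{1+\sqrt 5}{2}$ demands either a much finer grid --- potentially infeasible in raw CPU time --- or local analytic control near the extremal regular-pentagon configuration, where the bound is presumably tight and therefore the naive Lipschitz estimate is too loose. A secondary difficulty is handling the degenerate cases $\# Z \leq 3$, where the enclosing ellipse of minimum area is not uniquely determined by $Z$ and the L\"owner--John constraints must be reinterpreted as open conditions rather than the equalities available when $\#Z \in \{4,5\}$.
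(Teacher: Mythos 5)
Your proposal matches the paper's own approach, but the crucial observation is that Conjecture~\ref{conj:finite} is \emph{not proved} in the paper: it is offered as a strengthening of the Dolnikov--Eckhoff conjecture, and what is actually established is Theorem~\ref{thm:grid} --- a verification of the conjecture on a grid in the $(r_1,r_2,\alpha_1,\ldots,\alpha_k)$ parameter space (closed-form for $k=3$, computer-assisted divide-and-conquer for $k=4,5$) --- together with the a~priori bounds of Lemmas~\ref{lem:2} and~\ref{lem:1.5}, which compactify the parameter domain. Combined with the Lipschitz extrapolation lemmas of Section~\ref{sec:aux}, the grid verification yields only Theorem~\ref{thm:estimate} with the constant $1.645$ rather than $\frac{1+\sqrt 5}{2}$. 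You have put your finger on precisely the reason: the Lipschitz estimates lose a fixed amount when propagating a grid certificate to the continuum, and near the extremal regular-pentagon configuration the conjectured inequality is tight, so no finite grid resolution can close the gap by this extrapolation alone without some additional local analytic argument near the extremizer. In that sense your proposal is faithful to the paper, and like the paper it stops short of a proof of the statement as posed.

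Two small corrections to your setup. Since the property $T(rB)$ is invariant under rigid motions of the whole point configuration, the position of $E$ in the plane carries no information; after normalizing translations and rotations, the shape of $E$ is captured by the two semi-axes $(r_1, r_2)$ only --- not three parameters as you state --- which is exactly the parametrization~\eqref{eq:param_1}--\eqref{eq:param_3} the paper uses. And your concern about $\#Z = 3$ is misplaced: the minimum-area ellipse through three non-collinear points is the unique Steiner circumellipse, and the John conditions~\eqref{eq:john_ball} force the image of $Z$ under the normalizing map $N$ (with $NE = B$) to be an equilateral inscribed triangle; the paper uses this in Section~\ref{sec:grid_a} to settle $k = 3$ in a few lines with no computation. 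The genuinely degenerate situations $\#Z \leq 2$ make the hypotheses of the conjecture vacuous (no nondegenerate elliptic disk has two boundary points and minimal area), so they require no treatment.
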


We will use the following parametrization of Conjecture~\ref{conj:finite}. Consider the cartesian coordinate system $(x, y)$,
where
\begin{equation}\label{eq:param_1}
  E = \left\{ (x, y) \in \mathbb R^2 : \frac{x^2}{r_1^2} + \frac{y^2}{r_2^2} \leq 1 \right\}, \quad r_1 \geq r_2.
\end{equation}
If $Z = \{ z_1, z_2, \ldots, z_k \}$, $k \leq 5$, then
\begin{equation}\label{eq:param_2}
  z_i = (r_1 \cos \alpha_i, r_2 \sin \alpha_i), \quad \text{for} \quad i = 1, 2, \ldots, k.
\end{equation}
Without loss of generality one can assume
\begin{equation}\label{eq:param_3}
0 \leq \alpha_1 < \alpha_2 < \ldots < \alpha_k < 2\pi.
\end{equation}
Thus Conjecture~\ref{conj:finite} gets parameterized by $r_1, r_2, \alpha_1, \alpha_2, \ldots, \alpha_k$.

\begin{rem*}
One can eliminate the trigonometric expressions in the parametrization by the standard substitution $t_i = \tan \frac{\alpha_i}{2}$.
The condition that $E$ is the ellipsoid of minimal area containing $Z$ is algebraic in $t_i$ (see, for example,~\cite{Grr11}). The set $R$ is
defined algebraically in $r_1$, $r_2$ and $t_i$. Therefore Conjecture~\ref{conj:finite} has algebraic parametrization with
at most 7 variables.
\end{rem*}

We are ready to state the main results of this paper.

\begin{thm}\label{thm:reduction}
If Conjecture~\ref{conj:finite} holds, then Conjecture~\ref{conj:DE} holds, too.
\end{thm}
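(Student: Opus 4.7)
Suppose Conjecture~\ref{conj:finite} holds, and let $X \subset \mathbb R^2$ be a finite point set with the property $T(B, 3)$; write $\tau = \frac{1+\sqrt{5}}{2}$. The goal is to exhibit a line transversal to $\{\tau B + x : x \in X\}$. We may assume $X$ is not contained in a line (otherwise the line itself is a transversal to all of $\{B + x : x \in X\}$), so the unique minimum-area ellipse $E$ containing $X$ is well defined and non-degenerate.

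The first real step is to extract from $X \cap \partial E$ a subset $Z$ with $\#Z \leq 5$ for which $E$ is still the minimum-area ellipse containing $Z$. This is a classical Carath\'eodory-type reduction for the L\"owner ellipse: after the standard parametrization $\{y : \lVert A y + b \rVert \leq 1\}$ with $A$ positive definite, the problem of minimizing the area of an enclosing ellipse becomes a convex program in a $5$-dimensional space, namely the maximisation of $\log \det A$ subject to the linear-in-parameters constraints $\lVert A x + b\rVert \leq 1$ for $x \in X$; the optimum of such a program is pinned down by at most $5$ active constraints, and these contact points constitute the desired $Z \subseteq X \cap \partial E$.

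Once $Z$ is in place, the remainder is bookkeeping. By construction $X \subseteq E$. For any $x \in X$ one has $\{x\} \cup Z \subseteq X$, so every triple in $\{x\} \cup Z$ is a triple of $X$ and therefore admits a line transversal; this is precisely the assertion $x \in R(Z)$ by~\eqref{eq:r}. Hence $X \subseteq R(Z) \cap E$, and Conjecture~\ref{conj:finite} applied to the pair $(E, Z)$ produces a single line transversal to $\{\tau B + y : y \in R(Z) \cap E\}$, which a fortiori stabs $\tau B + x$ for every $x \in X$. Combined with the matching lower bound $\lambda(B, 3) \geq \tau$ from the regular pentagon example recalled in the introduction, we conclude $\lambda(B, 3) = \tau$, which is Conjecture~\ref{conj:DE}.

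The whole reduction is therefore driven by a single non-trivial ingredient, namely the $5$-point support lemma for the L\"owner ellipse in the plane; the role of Conjecture~\ref{conj:finite} is exactly to handle the worst case of ``points lying on $\partial E$'' to which the general case is reduced by the Carath\'eodory step. The main obstacle I would expect is merely to phrase the support lemma in a form that guarantees $Z \subseteq X$ (not just $Z \subseteq \partial E$), which the convex-program formulation above achieves automatically since the active inequality constraints correspond to particular $x_i \in X$.
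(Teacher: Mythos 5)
Your proposal is correct and takes a genuinely different route to the same reduction. Both your argument and the paper's feed Conjecture~\ref{conj:finite} a set $Z \subseteq X \cap \partial E$ with $\#Z \le 5$ for which $E$ is the L\"owner ellipse, and then observe that $X \subseteq R(Z) \cap E$. The difference is in how $\#Z \le 5$ is arranged. The paper first perturbs $X$ to a nearby generic $X'$ so that no six of its points lie on a common conic, whence $Z = X' \cap \partial E$ automatically has at most five points, and argues that $X'$ is still a counterexample to Conjecture~\ref{conj:DE}. You skip the perturbation altogether and select a $\le 5$-element support set inside $X \cap \partial E$ by a KKT/Carath\'eodory argument for the L\"owner-ellipse convex programme. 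Your route is arguably cleaner in two respects: it operates on the original $X$, and it sidesteps a genuine subtlety that the paper glosses over here, namely that a small perturbation of a counterexample preserves the failure of $T\bigl(\tfrac{1+\sqrt5}{2}B\bigr)$ (an open condition) but does not automatically preserve $T(B,3)$ (a closed condition); the paper handles this carefully via a rescaling step in the proof of Theorem~\ref{thm:estimate} but omits it in Theorem~\ref{thm:reduction}. One small wording note: the membership constraints ``$x_i \in E$'' in your convex programme are convex (second-order cone) rather than literally linear in $(A,b)$, but this does not affect the Carath\'eodory count in the $5$-dimensional parameter space; alternatively, $\#Z \le 5$ follows directly from the paper's own Corollary~\ref{cor:simplex} combined with Carath\'eodory's theorem in $\mathbb R^4$.
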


\begin{thm}\label{thm:grid}
Let $(k; r_1, r_2; \alpha_1, \alpha_2, \ldots, \alpha_k)$ be the parameters as in~\eqref{eq:param_1},~\eqref{eq:param_2} and~\eqref{eq:param_3}. Then
Conjecture~\ref{conj:finite} is true in the following cases:
\begin{itemize}
  \item[(a)] $k = 3$.
  \item[(b)] $k = 4$, $r_1, r_2 \in 0.015 \mathbb Z$.
  \item[(c)] $k = 5$, $r_1, r_2 \in 0.015 \mathbb Z$, and $\alpha_i \in \frac{\pi}{960} (\mathbb Z + 1/2)$ for each $i = 1, 2, \ldots, 5$.
\end{itemize}
\end{thm}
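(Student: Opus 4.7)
The plan is rigorous computer-assisted verification, organised by case. For case~(a), $k = 3$, the minimum-area circumscribing ellipse of three points is their Steiner circumellipse, whose centre coincides with the centroid of $Z$; this centroid condition, together with the rotation gauge of~\eqref{eq:param_1}, reduces the parameter count enough that the case can be handled either analytically or by a low-dimensional exhaustive search. The geometric content is also particularly transparent here: three unit disks admit a common line transversal iff the minimum-width direction of their centre triangle has width at most~$2$, so each of the three pairwise constraints defining $R$ in~\eqref{eq:r} is a simple semi-algebraic region, and one verifies directly that its intersection with $E$ lies inside an appropriate $\tfrac{1+\sqrt5}{2}B$-slab.

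For cases~(b) and~(c) I would enumerate the stated grid and certify, at each grid point, the existence of a single line at distance at most $\tau := (1+\sqrt5)/2$ from every point of $R \cap E$. The verification has to be rigorous, not merely floating-point: I would replace each parameter by an interval of width equal to the grid spacing and propagate interval enclosures throughout. Because the algebraic system defining $R$ and $E$ has bounded degree and the remark above furnishes an algebraic description in at most seven variables, the relevant quantities (positions of the $z_i$, boundaries of the three pairwise regions cutting out $R$, and distances to a candidate transversal line) are Lipschitz with computable constants. The grid spacings $0.015$ (for $r_1, r_2$) and $\pi/960$ (for the $\alpha_i$) are then chosen precisely so that a transversality certificate at the interval parameter extends to every parameter vector in the surrounding cell.

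The principal obstacle is the combination of sheer size and required rigour. Case~(c) has seven parameters on comparatively fine grids, so any hope of feasibility rests on aggressive symmetry reduction (cyclic permutation of the $\alpha_i$, the involution $r_1 \leftrightarrow r_2$ composed with the axis swap, and reflections of the angles) together with an early-termination scheme that certifies ``easy'' cells cheaply. A related technical difficulty is the treatment of \emph{degenerate} configurations---those where $r_1 \approx r_2$, where two $\alpha_i$'s are nearly equal, or where some triple of disks admits only a tangential transversal---since the interval widths blow up exactly there; such cells must be either subdivided further or dispatched by a separate analytic argument, and the choice of the specific grid spacings $0.015$ and $\pi/960$ presumably reflects the worst-case behaviour of this trade-off. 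The hard part of writing up the proof is therefore not any single geometric idea but rather assembling a verified computation whose output, together with the Lipschitz estimates, exhausts the parameter space without gaps.
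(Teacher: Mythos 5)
Your high-level plan — interval/Lipschitz certification over a discretized parameter space, with subdivision near degenerate cells — is in the same family as what the paper actually does, but it has three concrete gaps.

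First, in part~(b) the angles $\alpha_1,\dots,\alpha_4$ are \emph{not} on a grid: only $r_1,r_2$ are restricted to $0.015\,\mathbb Z$. Your proposal ``enumerate the stated grid \dots\ replace each parameter by an interval of width equal to the grid spacing'' has no grid spacing to work with for the angular variables. The paper instead runs an adaptive divide-and-conquer over the angle cube $[0,2\pi)^4$ (starting from $n_0=120$ subintervals per coordinate and doubling as needed), with Lemma~\ref{lem:lipschitz} and Lemma~\ref{lem:functional_lipschitz} supplying the error bounds at each scale, and terminates only when every cell is dispatched by one of several certificate types. You mention subdivision only as a fallback near degeneracies, whereas it is the main mechanism in part~(b). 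Part~(c) is indeed a finite grid in all parameters, and the paper's recursion bottoms out exactly at the grid $\alpha_i\in\frac{\pi}{960}(\mathbb Z+1/2)$, so there your picture is closer.

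Second, $0.015\,\mathbb Z$ is unbounded, so ``enumerate the stated grid'' is not a finite task without a priori bounds on $(r_1,r_2)$. The paper supplies these: Lemma~\ref{lem:2} rules out $r_2>2$, Lemma~\ref{lem:1.5} rules out $r_1>3$, and $r_2<\frac{1+\sqrt5}{2}$ is trivial because then all of $E$ fits in a width-$(1+\sqrt5)$ slab. Your proposal does not address how the enumeration becomes finite.

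Third, and more a matter of structure than of correctness, your certificate is monolithic (``certify a single line at distance $\le\tau$ from every point of $R\cap E$''), but many cells are disposed of more cheaply: a cell may fail because the candidate contact set violates $T(B,3)$ so that $R=\varnothing$; or because the John--Ball minimality condition cannot hold anywhere in the cell (detected via the functional $F$ of Corollary~\ref{cor:four_point_identity}--\ref{cor:five_point_ineq}); or because some arc exceeds $2\pi/3$ (Corollary~\ref{cor:no_large_arcs}). Building in these alternative terminations is what makes the computation feasible, and they are not in your plan. Your sketch for part~(a) via the Steiner circumellipse is workable but heavier than necessary: the paper simply observes that if $r_2>\frac{1+\sqrt5}{2}$, the normalized equilateral contact triangle already fails $T(B)$, so $R(Z)$ is empty and the claim is vacuous; otherwise $E$ itself lies in a $\frac{1+\sqrt5}{2}$-slab.
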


\begin{thm}\label{thm:estimate}
$\lambda_{disj}(B, 3) \leq \lambda (B, 3) \leq  1.645$.
\end{thm}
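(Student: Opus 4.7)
The plan is to combine Theorems~\ref{thm:reduction} and~\ref{thm:grid} via a quantitative stability argument. Theorem~\ref{thm:grid} verifies Conjecture~\ref{conj:finite} only on a grid (except in the grid-free case $k=3$), so I cannot directly invoke Theorem~\ref{thm:reduction} to obtain the conjectured bound $\tau := \frac{1+\sqrt{5}}{2}$. Instead I will establish a Lipschitz-type estimate in parameter space so that the small gaps between grid points translate into a controlled enlargement of the transversal constant, from $\tau \approx 1.6180$ up to $1.645$; the slack $1.645 - \tau \approx 0.027$ is the error budget.

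First, I would reformulate Theorem~\ref{thm:reduction} in a slightly strengthened form: if, for every admissible parameter tuple as in~\eqref{eq:param_1}--\eqref{eq:param_3}, the set $R \cap E$ of Conjecture~\ref{conj:finite} admits a $\lambda B$-transversal, then $\lambda(B, 3) \leq \lambda$. Nothing in the reduction of Theorem~\ref{thm:reduction} is special to the value $\tau$, so this variant should follow by the same argument with $\lambda$ in place of $\tau$ throughout.

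Next comes the key stability lemma: if Conjecture~\ref{conj:finite} holds at a grid parameter $\mathbf{p}^{\ast}$ with constant $\tau$, then it holds at any nearby parameter $\mathbf{p}$ with constant $\tau + C \|\mathbf{p} - \mathbf{p}^{\ast}\|$, where $C$ is explicit and depends only on an a priori bound on the parameters. Its proof has three ingredients: (i) the set $R(Z)$ defined by~\eqref{eq:r} is cut out by algebraic inequalities in the parameters and hence varies Hausdorff-continuously, with a computable Lipschitz constant; (ii) the minimum-area ellipse $E(Z)$ varies continuously with $Z$ under mild nondegeneracy; and (iii) if a family of unit disks admits a $\mu B$-transversal, then after each center is translated by at most $\varepsilon$ the family still admits a $(\mu + \varepsilon) B$-transversal. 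Combined with the strengthened Theorem~\ref{thm:reduction}, one rounds any parameter to the nearest grid point, applies Theorem~\ref{thm:grid} there, and pays at most $C \cdot \Delta_{\max}$ in the constant, where $\Delta_{\max}$ is the worst-case distance to the grid (roughly half the spacing in each coordinate).

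A compactness argument is also needed, showing that only finitely many grid points are actually relevant: the parameters $(r_1, r_2)$ arising from any potential counterexample must be bounded, since very elongated or very large ellipses reduce to simpler configurations---either with fewer boundary points, i.e.\ the grid-free case $k = 3$ of Theorem~\ref{thm:grid}(a), or to situations where $R \cap E$ is already empty. Together with a numerical check that $C \cdot \Delta_{\max} \leq 0.027$ throughout the bounded parameter box, this yields $\lambda(B, 3) \leq 1.645$. The main obstacle is quantifying the Lipschitz constant $C$ in~(i): the boundary of $R(Z)$ is a piecewise semi-algebraic curve whose combinatorial type changes across codimension-one strata of parameter space, and obtaining a uniform bound that survives these transitions is the technical heart of the argument. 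A secondary, more routine issue is verifying the compactness reduction to a bounded parameter box.
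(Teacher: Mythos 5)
Your high-level strategy---round parameters to the grid, apply Theorem~\ref{thm:grid}, and pay a small Lipschitz-type penalty within the slack $1.645 - \frac{1+\sqrt 5}{2} \approx 0.027$---matches the paper's, and you correctly identify the needed ingredients: a perturbation lemma for transversals (the paper's Lemma~\ref{lem:lipschitz}), an a priori bound confining $(r_1,r_2)$ to a compact box (the paper's Lemmas~\ref{lem:2} and~\ref{lem:1.5}), and a case split on $k$. But the proposal rests on a stability claim that is both unnecessary and, as stated, false.

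The ``stability lemma'' in (i) asserts that $Z \mapsto R(Z)$ is Hausdorff-Lipschitz with a computable uniform constant. This fails: $R(Z)$ degenerates to $\varnothing$ as soon as some triple within $Z$ itself violates $T(B)$, so the map $Z \mapsto R(Z)$ is not even Hausdorff-continuous, let alone uniformly Lipschitz across the codimension-one strata you flag. The paper sidesteps this entirely by a \emph{contraction} argument (Lemma~\ref{lem:contraction}): the maps $M_1 : (x,y) \mapsto (\tfrac{r_1'}{r_1}x, \tfrac{r_2'}{r_2}y)$ (rounding $r_1, r_2$ \emph{down} to the grid) are $1$-Lipschitz, so $M_1 X$ still satisfies $T(B,3)$; the containment $M_1X \subset M_1 E \cap R(M_1Z)$ is then automatic from $M_1Z \subset M_1X$, with no need for any estimate on how $R$ varies. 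The only penalty is the reinflation map $M_2$, which costs a factor $\frac{r_2'+0.015}{r_2'}\cdot\frac{1+\sqrt 5}{2} < 1.645$. This replaces your whole quantitative-stability machinery with a one-line monotonicity observation.

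The second gap is in (ii). For $k=5$ you must also snap the angles $\alpha_i$ to the grid $\frac{\pi}{960}(\mathbb Z + \tfrac12)$, and rounded boundary points need not satisfy the John minimality condition~\eqref{eq:john_ball} for the rounded ellipse. Your ``mild nondegeneracy'' hedge does not address this. The paper resolves it with a homotopy (Subcase 3.3): move $\alpha_i(t)$ linearly from $\alpha_i$ to $\alpha_i'$ and track whether positive John coefficients persist. If they do throughout $[0,1]$, apply Theorem~\ref{thm:grid}(c) at $t=1$; if they degenerate at some $t_0$, Corollary~\ref{cor:simplex} yields a proper subset of cardinality $3$ or $4$ that carries the John representation, and one falls back to part (a) or (b) of Theorem~\ref{thm:grid}. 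Without this fallback mechanism your argument has no way to continue when the snapped five-point configuration ceases to be extremal for its ellipse. Finally, note that Theorem~\ref{thm:grid}(b) does \emph{not} discretize the angles, so for $k=4$ only the $(r_1,r_2)$-rounding arises---your proposal treats all $k$ uniformly and thereby overcomplicates the $k=4$ case as well.
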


We conclude this section with a brief guide over the contents of the rest of the paper.

\begin{itemize}
  \item Section~\ref{sec:thm_1} contains the proof of Theorem~\ref{thm:reduction}.
  \item Section~\ref{sec:aux} accommodates a number of auxiliary statements necessary for the further argument. We formulate
        some useful corollaries of the so-called John representation associated with the minimum area elliptic disk. Then we provide
        several simple tools for extrapolating estimates on a finite subset of the parameter space to the subspace covered by Theorem~\ref{thm:grid},
        and then to the entire parameter space. Finally, we state two lemmas that allow us eliminate all ``too long'' elliptical disks from consideration.
        The proofs are given in the subsequent sections.
  \item Sections~\ref{sec:john_proofs}--\ref{sec:lem_2_proof} are devoted to the proofs of statements formulated in Section~\ref{sec:aux}. The only proof
        that remains postponed is the one of Lemma~\ref{lem:1.5}.
  \item Section~\ref{sec:computer_proofs} addresses the computer-assisted proofs, namely the ones of Lemma~\ref{lem:1.5} and parts (b)--(c) of Theorem~\ref{thm:grid}.
  \item Section~\ref{sec:grid_a} contains a short (non-computer-assisted) proof of part (a) of Theorem~\ref{thm:grid}.
  \item Section~\ref{sec:proof_est} reduces Theorem~\ref{thm:estimate} to Theorem~\ref{thm:grid}, which is proved earlier.
\end{itemize}

\section{Reduction of the Dolnikov--Eckhoff conjecture to Conjecture~\ref{conj:finite}}\label{sec:thm_1}

Let us show that Conjecture~\ref{conj:finite} indeed implies Conjecture~\ref{conj:DE}.

\begin{proof}[Proof of Theorem~\ref{thm:reduction}]
Let $X$ be a counterexample to Conjecture~\ref{conj:DE}. One can choose a sufficiently small constant
$\varepsilon > 0$ so that every sufficiently small perturbation $X'$ of the set $X$ is still a counterexample to Conjecture~\ref{conj:DE}.
Let us choose $X'$ to be sufficiently generic so there is no ellipse passing through 6 or more points of $X'$.

Let $E$ be an elliptical disk of minimal area containing the set $X'$. Denote $Z = X' \cap \partial E$.
Since $X'$ is generic, we have $\# Z \leq 5$. In addition, by [ref:Ball], $E$ is the (only) elliptical disk of minimal
area containing $Z$.

Let $R$ be defined according to~\eqref{eq:r}. Then, since $X'$ satisfies the property $T(B, 3)$, we have
\begin{equation*}
  X' \subset E \cap R.
\end{equation*}
But if Conjecture~\ref{conj:finite} holds, then $X'$ satisfies the property $T \left( \frac{1 + \sqrt{5}}{2} B \right)$.
This contradicts our previous assumption that $X'$ is a counterexample to Conjecture~\ref{conj:DE}.
\end{proof}

\section{Some auxiliary results}\label{sec:aux}

\subsection{John--Ball criterion of the minimum area elliptic disk}

This subsection is based on the following Proposition~\ref{prop:john_ball}. The
proof, in any dimension, not only in the plane, can be found in~\cite{Joh48}
(the necessary property of the minimum area ellipsoid) and in~\cite{Bal92}
(the sufficiency). See also~\cite{Grr11} for, perhaps, a more accessible exposition.

\begin{prop}\label{prop:john_ball}
Let $X \subset \mathbb R^2$ be a finite set. Then
\begin{enumerate}
  \item There exists an elliptical disk $E \supset X$ such that every
  elliptical disk $E' \supset X$ that is distinct from $E$ satisfies
  $|E'| > |E|$. (I.e., the minimum area elliptical disk containing $X$
  is unique.)
  \item The following assertions are equivalent.
  \begin{itemize}
    \item[(i)] $B$ is the minimum area elliptic disk containing $X$.
    \item[(ii)] $X \subset B$ and there exists a subset
    $\{x_1, x_2, \ldots, x_k\} \subset X \cap \partial B$ and $k$
    positive numbers $c_1, c_2, \ldots, c_k$ such that
\begin{equation}\label{eq:john_ball}
  \begin{aligned}
    c_1 x_1 + c_2 x_2 + \ldots + c_k x_k & = \mathbf 0, \\
    c_1 x_1 \otimes x_1 + c_2 x_2 \otimes x_2 + \ldots +
    c_k x_k \otimes x_k & = \operatorname{Id}
  \end{aligned}
\end{equation}
  \end{itemize}
\end{enumerate}
\end{prop}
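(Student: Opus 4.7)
The plan is to set up the problem as a convex optimization over ellipses and combine a Karush--Kuhn--Tucker analysis with a direct algebraic argument using the decomposition of identity. Parametrize an elliptical disk as $E(A, b) = \{x \in \mathbb R^2 : (x-b)^T A (x-b) \leq 1\}$, where $A$ is a symmetric positive definite $2\times 2$ matrix and $b \in \mathbb R^2$, so that $|E(A,b)| = \pi/\sqrt{\det A}$. Minimizing area subject to $X \subset E(A, b)$ becomes the problem of minimizing $-\log \det A$ (a strictly convex function of $A$) subject to the convex constraints $(x_i - b)^T A (x_i - b) \leq 1$ for $x_i \in X$. Existence of a minimizer follows from compactness, since the feasible region is closed and the area functional is coercive on the relevant sublevel sets.

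The crucial implication is (ii) $\Rightarrow$ (i), which will simultaneously yield the uniqueness statement of part~1. Assume $B$ satisfies (ii). Taking the trace of the second identity in~\eqref{eq:john_ball} and using $|x_i| = 1$ gives $\sum c_i = 2$. For any containing elliptical disk $E(A, b)$, summing the containment inequalities $(x_i - b)^T A (x_i - b) \leq 1$ weighted by $c_i$ and expanding, both identities in~\eqref{eq:john_ball} collapse the cross terms and produce
\[ \operatorname{tr}(A) + 2 b^T A b \leq 2. \]
Since $A \succ 0$, this forces $\operatorname{tr}(A) \leq 2$, and hence $\det A \leq 1$ by AM--GM on the eigenvalues of $A$. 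Therefore $|E(A,b)| \geq \pi = |B|$, with equality forcing $b = 0$, $\operatorname{tr}(A) = 2$ and $A$ scalar, i.e., $E(A,b) = B$. This shows that under (ii), $B$ is the unique minimum area elliptical disk containing $X$.

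For the reverse implication (i) $\Rightarrow$ (ii), apply the KKT conditions to the convex program above. Stationarity at the optimum $(A, b) = (\operatorname{Id}, 0)$ in the $A$-direction yields $\sum \lambda_i (x_i - b)(x_i - b)^T = A^{-1}$, specializing to $\sum \lambda_i\, x_i \otimes x_i = \operatorname{Id}$, and in the $b$-direction it yields $\sum \lambda_i x_i = 0$, with multipliers $\lambda_i \geq 0$ supported on the active constraints $x_i \in X \cap \partial B$ by complementary slackness. The identity $\sum \lambda_i x_i \otimes x_i = \operatorname{Id}$ forces a positive subset of $\lambda_i$, which gives a valid John representation. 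To deduce part~1 in full generality, apply an affine transformation sending any candidate minimum area elliptical disk $E \supset X$ to $B$; this preserves containment and the minimization property (since area ratios are preserved), so the uniqueness proved under normalization transfers.

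The main technical obstacle is the KKT/constraint-qualification step: one must verify that Slater's condition or a suitable linear independence condition holds at the optimum, so that Lagrange multipliers exist with the required support and positivity properties. This is classical in convex analysis, but the open constraint $A \succ 0$ and the possibility of several simultaneously active quadratic constraints (up to $5$ generically in $\mathbb R^2$, matching the dimension of the space of ellipses) demand careful bookkeeping. Once this technical step is handled, the two implications in part~2 together with the reduction by affine invariance complete part~1 as well.
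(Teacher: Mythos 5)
The paper itself does not prove this proposition; it explicitly cites John~\cite{Joh48} for the necessity, Ball~\cite{Bal92} for the sufficiency, and Gruber~\cite{Grr11} for an exposition, so there is no ``paper proof'' to compare against. Your proposal is essentially the standard John--Ball argument: the implication $(ii)\Rightarrow(i)$ via the trace computation $\operatorname{tr}(A) + 2b^{T}Ab \leq 2$ followed by AM--GM is exactly Ball's short argument, and it is correct as written, including the derivation of uniqueness from the equality case. Two remarks are worth making, though.

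First, there is a genuine gap in the setup of the $(i)\Rightarrow(ii)$ step. The constraint $g_i(A,b) := (x_i - b)^{T} A (x_i - b) \leq 1$ is \emph{not} jointly convex in $(A,b)$: already in the scalar case $g(a,\beta)=a(x-\beta)^2$ the Hessian $\begin{pmatrix} 0 & -2(x-\beta) \\ -2(x-\beta) & 2a \end{pmatrix}$ has negative determinant whenever $x\neq\beta$. Since your route to KKT multipliers leans on convexity and Slater's condition, this breaks the argument as stated. The standard fix is to reparametrize the ellipse as $E=\{x:\|Lx-c\|\leq 1\}$ with $L\succ 0$, objective $-\log\det L$. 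In these variables the constraints $\|Lx_i - c\|\leq 1$ are convex, the objective is strictly convex in $L$, and Slater's condition holds (take a slightly shrunk $L$). The stationarity conditions you wrote down are then the correct ones after translating via $A = L^2$, $b = L^{-1}c$, and specializing to $L=\operatorname{Id}$, $c=0$.

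Second, a small hypothesis is being used tacitly: existence of a minimizer (part 1) requires that $X$ affinely spans $\mathbb R^2$; for a collinear $X$ the infimum of the area is $0$ and is not attained. The proposition, as used in the paper, always applies to full-dimensional configurations, so this is harmless, but in a self-contained proof you should state it. With those two points repaired, the logical structure you outline --- existence, then KKT at any minimizer giving $(ii)$, then $(ii)$ yielding $(i)$ and uniqueness, then affine invariance transporting uniqueness to arbitrary $E$ --- is sound and reproduces the classical proof.
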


Proposition~\ref{prop:john_ball} will be used through the three corollaries below. Before we turn to the corollaries, let us introduce a functional playing a crucial role in the subsequent argument. Namely, define
\begin{equation}
F(\alpha_1, \alpha_2, \alpha_3, \alpha_4) =
\cos \frac{1}{2}(\alpha_1 + \alpha_2 - \alpha_3 - \alpha_4) +
\cos \frac{1}{2}(\alpha_1 - \alpha_2 + \alpha_3 - \alpha_4) +
\cos \frac{1}{2}(\alpha_1 - \alpha_2 - \alpha_3 + \alpha_4).
\end{equation}

\begin{cor}\label{cor:simplex}
Let
\begin{equation*}
  X = \{ (\cos \alpha_i, \sin \alpha_i) : i = 1, 2, \ldots, k \}.
\end{equation*}
Then the following assertions are equivalent.
  \begin{itemize}
    \item[(i)] $B$ is the minimum area elliptic disk containing $X$.
    \item[(ii)] $(0, 0, 0, 0) \in \conv \{ (\cos \alpha_i, \sin \alpha_i, \cos 2\alpha_i, \sin 2\alpha_i) : i = 1, 2, \ldots, k \}$.
  \end{itemize}
\end{cor}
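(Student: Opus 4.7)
The strategy is to specialize Proposition~\ref{prop:john_ball}(2) using the double-angle identities. Because each $x_i = (\cos\alpha_i, \sin\alpha_i)$ lies on $\partial B$, the containment $X \subset B$ is automatic and $X \cap \partial B = X$. Thus condition (i) is equivalent to the existence of nonnegative coefficients $c_1, \ldots, c_k$, not all zero, satisfying the two identities~\eqref{eq:john_ball} (where indices carrying $c_i = 0$ are simply omitted from the distinguished subset, as permitted by Proposition~\ref{prop:john_ball}).

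Writing the first identity $\sum_i c_i x_i = \mathbf{0}$ in coordinates gives
\[
\sum_i c_i \cos\alpha_i = 0, \qquad \sum_i c_i \sin\alpha_i = 0.
\]
For the tensor equation $\sum_i c_i x_i \otimes x_i = \operatorname{Id}$, I would substitute $\cos^2\alpha = \tfrac12(1 + \cos 2\alpha)$, $\sin^2\alpha = \tfrac12(1 - \cos 2\alpha)$, and $\cos\alpha \sin\alpha = \tfrac12 \sin 2\alpha$. Then the trace of the equation yields $\sum_i c_i = 2$; the difference of the two diagonal entries yields $\sum_i c_i \cos 2\alpha_i = 0$; and the off-diagonal entry yields $\sum_i c_i \sin 2\alpha_i = 0$.

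Combining these four linear relations, (i) becomes equivalent to the existence of nonnegative $c_i$, not all zero, with
\[
\sum_i c_i \bigl(\cos\alpha_i,\ \sin\alpha_i,\ \cos 2\alpha_i,\ \sin 2\alpha_i\bigr) = \mathbf{0} \in \mathbb{R}^4,
\]
which, after renormalizing by $\sum_j c_j$, is precisely the convex-hull condition (ii). Conversely, given any convex combination witnessing (ii), multiplying all weights by $2/\sum_j c_j$ restores the trace condition $\sum_i c_i = 2$ and hence all of~\eqref{eq:john_ball}. The argument is essentially bookkeeping via trigonometric identities; the only mildly delicate point is reconciling the strict positivity demanded on the supporting subset of Proposition~\ref{prop:john_ball} with the nonnegativity of weights in a convex combination, which is handled by the standard convention of suppressing zero-weight indices. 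No serious obstacle is anticipated.
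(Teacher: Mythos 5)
Your proof is correct and follows essentially the same route as the paper: both rewrite the tensor identity in~\eqref{eq:john_ball} via double-angle substitutions to deduce $\sum c_i\cos 2\alpha_i = 0$ and $\sum c_i\sin 2\alpha_i = 0$, then package this with the first-moment condition as the four-dimensional relation defining~(ii), and conversely rescale a convex combination to recover~\eqref{eq:john_ball}. Your trace bookkeeping ($\sum c_i = 2$) makes the normalization step slightly more explicit than in the paper (which divides by $\sqrt{\lambda}$ where $\lambda$ is the natural choice, a harmless slip), but the argument is the same.
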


\begin{cor}\label{cor:no_large_arcs}
Let
\begin{equation*}
  X = \{ (\cos \alpha_i, \sin \alpha_i) : i = 1, 2, \ldots, k \}.
\end{equation*}
be a finite set such that $B$ is the minimum area elliptical disk containing $X$. Assume, additionally, that
\begin{equation*}
  0 \leq \alpha_1 < \alpha_2 < \ldots < \alpha_k < 2\pi.
\end{equation*}
If $\alpha_{k + 1} = \alpha_1 + 2\pi$, then the inequality $\alpha_{i + 1} - \alpha_i \leq \frac{2\pi}{3}$ holds for
every $i = 1, 2, \ldots, k$.
\end{cor}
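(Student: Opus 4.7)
The plan is to invoke Corollary \ref{cor:simplex} and convert the statement into the assertion that $\mathbf 0 \in \conv\{\phi(\alpha_i) : 1 \leq i \leq k\}$, where $\phi(\alpha) = (\cos\alpha, \sin\alpha, \cos 2\alpha, \sin 2\alpha) \in \mathbb R^4$. Suppose for contradiction that some gap $\alpha_{i+1} - \alpha_i$ exceeds $2\pi/3$. Rotating the plane by a suitable angle (an operation that corresponds to a linear automorphism of $\mathbb R^4$ fixing the origin and preserving the image of $\phi$) one may assume that this gap is centred at angle $\pi$, so that after the rotation every $\alpha_j$ lies in a closed interval $[-\mu, \mu]$ with $\mu < 2\pi/3$.

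I would then exhibit a linear functional on $\mathbb R^4$ that is non-negative on $\phi([-\mu, \mu])$ and vanishes at few enough points to preclude the origin from being a non-trivial convex combination. The natural candidate is $L(x_1, y_1, x_2, y_2) = x_1 - x_2$, for which the factorisation
\begin{equation*}
L(\phi(\alpha)) = \cos\alpha - \cos 2\alpha = (1 + 2\cos\alpha)(1 - \cos\alpha)
\end{equation*}
shows that $L \circ \phi$ is non-negative on $[-2\pi/3, 2\pi/3]$ and, restricted to the sub-interval $[-\mu, \mu]$, vanishes only at $\alpha = 0$.

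To conclude, suppose $\mathbf 0 = \sum_j c_j \phi(\alpha_j)$ with $c_j \geq 0$ and $\sum_j c_j > 0$. Applying $L$ gives $\sum_j c_j L(\phi(\alpha_j)) = 0$, and non-negativity of each summand forces $c_j = 0$ whenever $\alpha_j \neq 0$. All weight must therefore sit on the (at most one) index with $\alpha_j = 0$; but then $\sum_j c_j \phi(\alpha_j) = \bigl(\sum_j c_j\bigr)(1, 0, 1, 0) \neq \mathbf 0$, contradicting Corollary \ref{cor:simplex}.

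There is no deep obstacle here; the argument is essentially the observation that the identity $\cos\alpha - \cos 2\alpha = (1 + 2\cos\alpha)(1 - \cos\alpha)$ produces the desired separating functional. The only bookkeeping to double-check is the rotation-invariance of Corollary \ref{cor:simplex}, which holds because a plane rotation by $\theta$ acts linearly on both the $(\cos\alpha, \sin\alpha)$ block (rotation by $\theta$) and the $(\cos 2\alpha, \sin 2\alpha)$ block (rotation by $2\theta$), so the condition $\mathbf 0 \in \conv$ is preserved.
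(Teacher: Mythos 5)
Your argument is correct and follows essentially the same route as the paper: reduce via Corollary~\ref{cor:simplex} to showing $\mathbf 0 \notin \conv\{\phi(\alpha_i)\}$, and exhibit a linear functional on $\mathbb R^4$ that is single-signed on the relevant arc and strictly so off a single point. The paper normalizes $\alpha_1 = 0$, $\alpha_2 > 2\pi/3$ and uses $\ell(\phi(\alpha)) = \cos(\alpha+\pi/3) - \cos(2\alpha-\pi/3)$, which is your $L$ up to a rotation and sign; your choice of centering the gap at $\pi$ so that all $\alpha_j \in [-\mu,\mu]$ with $\mu < 2\pi/3$ makes the sign of $(1+2\cos\alpha)(1-\cos\alpha)$ transparent on the whole range of $\alpha_j$ at once, which is in fact a cleaner bookkeeping than the paper's (whose stated inequality only holds on $[0,4\pi/3]$ and thus implicitly relies on the same centering you make explicit).
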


\begin{cor}\label{cor:four_point_identity}
Let $x_1, x_2, x_3, x_4 \in \partial B$ be four points such that the
identities~\eqref{eq:john_ball} hold with some positive coefficients $c_1, c_2, c_3, c_4$. Let $x_i = (\cos \alpha_i, \sin \alpha_i)$, where
Then $F(\alpha_1, \alpha_2, \alpha_3, \alpha_4) = 0$. \newline
Equivalently, if
$\alpha_1 < \alpha_2 < \alpha_3 < \alpha_4 < \alpha_1 + 2\pi$ and $\phi_i = \frac{\alpha_{i + 1} - \alpha_i}{2}$
($i = 1, 2, 3, 4$, $\alpha_5 = \alpha_1 + 2\pi$), then
\begin{equation}\label{eq:four_points}
  \cos(\phi_2 - \phi_4) + \cos(\phi_2 + \phi_4) + \cos(2 \phi_1 + \phi_2 + \phi_4) = 0.
\end{equation}
\end{cor}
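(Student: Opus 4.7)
The plan is to translate the John--Ball identities into linear equations on the $c_i$, use positivity to deduce a nontrivial linear dependence among four vectors in $\mathbb{R}^4$, force the corresponding $4\times 4$ determinant to vanish, and then identify that determinant (via a Vandermonde-type computation in complex exponentials) with $F(\alpha_1,\alpha_2,\alpha_3,\alpha_4)$ up to a non-vanishing factor.

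First, I expand $\sum c_i x_i = \mathbf{0}$ and $\sum c_i\, x_i\otimes x_i = \operatorname{Id}$ coordinate-wise. Taking the two components of the first identity, and the difference of diagonal entries together with twice the off-diagonal entry of the second, yields
\begin{equation*}
\sum c_i \cos\alpha_i = 0,\quad \sum c_i \sin\alpha_i = 0,\quad \sum c_i \cos 2\alpha_i = 0,\quad \sum c_i \sin 2\alpha_i = 0.
\end{equation*}
Since the $c_i$ are positive, this is a nontrivial linear dependence among the four vectors $v_j = (\cos\alpha_j,\,\sin\alpha_j,\,\cos 2\alpha_j,\,\sin 2\alpha_j)\in\mathbb{R}^4$, so $\det[v_1\mid v_2\mid v_3\mid v_4] = 0$.

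Next, I compute this determinant. Setting $z_j = e^{i\alpha_j}$, an invertible change of basis on the rows (replacing each real/imaginary pair by the complex combinations $z_j^{\pm 1}$ and $z_j^{\pm 2}$) converts the determinant, up to a nonzero scalar, into $\det(z_j^{a_i})$ with exponent multiset $a\in\{-2,-1,1,2\}$. Pulling $z_j^{-2}$ out of each column and reordering rows produces, up to sign, the generalized Vandermonde $\det(z_j^{b_i})$ with $b\in\{0,1,3,4\}$. By the Jacobi bialternant formula this factors as $e_2(z_1,\ldots,z_4)\prod_{j<k}(z_k-z_j)$, where $e_2$ denotes the second elementary symmetric polynomial; since the $z_j$ are distinct, we conclude $e_2(z_1,\ldots,z_4)=0$.

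Finally, I translate $e_2=0$ into $F=0$. With $s=\alpha_1+\alpha_2+\alpha_3+\alpha_4$, multiplying $e_2$ by $e^{-is/2}$ groups the six terms $e^{i(\alpha_j+\alpha_k)-is/2}$ into three complementary pairs, one for each partition of $\{1,2,3,4\}$ into two $2$-subsets, whose exponents are negatives of one another; each pair contributes $2\cos$ of the corresponding argument in $F$, giving $e_2\cdot e^{-is/2} = 2F(\alpha_1,\alpha_2,\alpha_3,\alpha_4)$ and hence $F=0$. The equivalent $\phi$-form follows by substituting $\alpha_j = \alpha_1 + 2(\phi_1+\cdots+\phi_{j-1})$ into $F$ and using $\phi_1+\phi_2+\phi_3+\phi_4=\pi$; each cosine picks up a sign flip from a $\pi$-shift, so the identity appears up to a global sign. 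The only slightly delicate step is the determinant identification, which can alternatively be handled by noting that $\det(z_j^{b_i})/\prod_{j<k}(z_k-z_j)$ is a symmetric polynomial of total degree $2$ in four variables and is therefore forced to be a scalar multiple of $e_2$, the constant pinned down by a convenient specialization.
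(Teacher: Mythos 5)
Your proof is correct and follows essentially the same route as the paper: both derive the vanishing of the $4\times 4$ determinant $\Delta(\alpha_1,\alpha_2,\alpha_3,\alpha_4)$ from a positive linear dependence among the vectors $(\cos\alpha_j,\sin\alpha_j,\cos 2\alpha_j,\sin 2\alpha_j)$, then factor $\Delta$ over complex exponentials to pull out a confluent Vandermonde and identify the remaining symmetric factor with $2F$. The only cosmetic differences are that the paper invokes Corollary~\ref{cor:simplex} to get $\Delta=0$ (you derive the linear dependence directly from~\eqref{eq:john_ball}, which is just as clean) and that the paper writes out the factorization with $e_j=\exp(i\alpha_j/2)$ rather than naming the Jacobi bialternant / Schur-polynomial identity that underlies it.
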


\begin{rem*}
A cyclic shift of the 4-tuple $(\phi_1, \phi_2, \phi_3, \phi_4)$ turns~\eqref{eq:four_points} into itself.
\end{rem*}

\begin{cor}\label{cor:five_point_ineq}
Let $x_1, x_2, \ldots, x_5 \in \partial B$ be five points such that the
identities~\eqref{eq:john_ball} hold with some positive coefficients $c_1, c_2, \ldots, c_5$. Let $x_i = (\cos \alpha_i, \sin \alpha_i)$, where
\begin{equation*}
\alpha_1 < \alpha_2 < \alpha_3 < \alpha_4 < \alpha_5 < \alpha_1 + 2\pi.
\end{equation*}
Then the following five real numbers:
\begin{equation}\label{eq:5_functionals}
F(\alpha_1, \alpha_2, \alpha_3, \alpha_4), \;
F(\alpha_2, \alpha_3, \alpha_4, \alpha_5), \;
- F(\alpha_3, \alpha_4, \alpha_5, \alpha_1), \;
F(\alpha_4, \alpha_5, \alpha_1, \alpha_2), \;
- F(\alpha_5, \alpha_1, \alpha_2, \alpha_3)
\end{equation}
are either all negative or all positive. Conversely, if $x_i$ and
$\alpha_i$ are as above, and the values~\eqref{eq:5_functionals} are either all negative or all positive, then $B$ is the minumum area elliptical disk containing the set $\{ x_1, x_2, x_3, x_4, x_5 \}$.
\end{cor}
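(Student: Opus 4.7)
The plan is to treat Proposition~\ref{prop:john_ball} with $k = 5$ as a linear system in the five unknowns $c_1, \ldots, c_5$ and read off the sign of each $c_i$ via Cramer's rule. Writing $v_i := (1, \cos\alpha_i, \sin\alpha_i, \cos 2\alpha_i, \sin 2\alpha_i)^T$, the identities~\eqref{eq:john_ball} together with the trace relation $\sum c_i = \operatorname{tr}(\operatorname{Id}) = 2$ are equivalent to $\sum_i c_i v_i = (2, 0, 0, 0, 0)^T$. Let $M$ be the matrix with columns $v_i$. Since only the first entry of the right-hand side is nonzero, expanding $\det M_i$ (Cramer's replaced matrix) along column $i$ gives
\begin{equation*}
  c_i \;=\; \frac{2(-1)^{i+1}\, \det \hat M_i}{\det M},
\end{equation*}
where $\hat M_i$ is the $4 \times 4$ minor obtained by deleting row~1 and column~$i$ from $M$.

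The key technical step is to evaluate both determinants using complex exponentials $u_k = e^{i\alpha_k}$ and the factorisation $u_j - u_i = 2i\sin\tfrac{\alpha_j - \alpha_i}{2}\, e^{i(\alpha_i + \alpha_j)/2}$. After clearing the negative powers $\bar u_k = u_k^{-1}$ by multiplying each row by $u_k^2$, $\det M$ becomes a standard $5$-point Vandermonde; the phases combine with $\prod u_k^{-2}$ to cancel and one gets
\begin{equation*}
  \det M \;=\; 256 \prod_{1 \le k < l \le 5} \sin \tfrac{\alpha_l - \alpha_k}{2}.
\end{equation*}
The analogous treatment of $\det \hat M_i$ yields a generalised Vandermonde with exponent sequence $(0,1,3,4)$, which factorises as the $4$-point Vandermonde times the second elementary symmetric polynomial $e_2$. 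Grouping the six resulting complex exponentials into three conjugate pairs indexed by complementary $2$-subsets (the same mechanism underlying Corollary~\ref{cor:four_point_identity}) produces precisely $2F(\alpha_{j_1}, \ldots, \alpha_{j_4})$, so
\begin{equation*}
  \det \hat M_i \;=\; 32\, F(\alpha_{j_1}, \alpha_{j_2}, \alpha_{j_3}, \alpha_{j_4}) \prod_{\substack{k < l \\ k, l \neq i}} \sin \tfrac{\alpha_l - \alpha_k}{2},
\end{equation*}
where $\{j_1 < j_2 < j_3 < j_4\} = \{1, \ldots, 5\} \setminus \{i\}$.

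Under the ordering $\alpha_1 < \cdots < \alpha_5 < \alpha_1 + 2\pi$ every sine factor is strictly positive, so Cramer's rule gives $\operatorname{sgn}(c_i) = (-1)^{i+1}\, \operatorname{sgn}\bigl(F(\alpha_{j_1}, \alpha_{j_2}, \alpha_{j_3}, \alpha_{j_4})\bigr)$. By the cyclic invariance of $F$ noted after Corollary~\ref{cor:four_point_identity}, these five sign conditions are exactly the signs of the quantities listed in~\eqref{eq:5_functionals}, so all $c_i > 0$ iff those five quantities share a common sign. Because $\det M \neq 0$ forces uniqueness, the Cramer solution must coincide with any positive coefficient tuple coming from Proposition~\ref{prop:john_ball}, which establishes both directions of the corollary. (The ``all negative'' alternative in~\eqref{eq:5_functionals} is in fact vacuous, since the trace row forces $\sum c_i = 2 > 0$; but including it does no harm.)

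The main obstacle I anticipate is the determinant bookkeeping in the middle paragraph: one must pin down the overall absolute constants and, more delicately, verify that the alternating factor $(-1)^{i+1}$ produced by the cofactor expansion is not offset by any hidden orientation inside $\det \hat M_i$, so that the sign pattern matches~\eqref{eq:5_functionals} exactly as written. Once this identity is secured, the remainder of the proof is pure sign-chasing.
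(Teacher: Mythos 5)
Your proposal is correct and follows essentially the same route as the paper: both reduce the statement, via Corollary~\ref{cor:simplex}, to the sign pattern of the five $4\times 4$ sub-determinants $\Delta(\cdot,\cdot,\cdot,\cdot)$ and then transfer that pattern to the $F$-values through the exponential/Vandermonde factorization already worked out in the proof of Corollary~\ref{cor:four_point_identity}; the paper phrases this as ``the origin lies in the interior of the simplex iff the sub-determinants share a sign,'' while you phrase the identical fact through Cramer's rule on the augmented $5\times 5$ system, which is just a more explicit bookkeeping of the same idea (and your constants $256$ and $32$ do check out). One small slip: you appeal to ``cyclic invariance'' of $F$ to match the arguments, but what is actually used is that $F$ is fully symmetric in its four arguments (the three cosine arguments are indexed by the three perfect matchings of $\{1,2,3,4\}$), and the remark after Corollary~\ref{cor:four_point_identity} refers to cyclic shifts of the $\phi_i$, not the $\alpha_i$; still, the needed identity $F(\alpha_3,\alpha_4,\alpha_5,\alpha_1)=F(\alpha_1,\alpha_3,\alpha_4,\alpha_5)$ is true, so this does not affect correctness.
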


Corollaries~\ref{cor:simplex}--\ref{cor:five_point_ineq} are proved in Section~\ref{sec:john_proofs}.

\subsection{Approximation lemmas}

Since the computer verification is possible only for a finite subset (though a dense one) in the space of parameters, we will need to perform an extrapolation
to the entire parameter space. This subsection provides some simple tools for such an extrapolation.

\begin{lem}\label{lem:contraction}
Let $M : \mathbb R^2 \to \mathbb R^2$ be a non-degenerate affine map such that for every $x, y \in \mathbb R^2$ the inequality
\begin{equation}\label{eq:contraction}
  \| Mx - My \| \leq \| x - y \|
\end{equation}
holds. Assume that a finite set $X$ satisfies the property $T(rB)$ for some $r > 0$. Then the set $MX$ satisfies the property $T(rB)$, too.
\end{lem}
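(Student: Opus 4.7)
The plan is to transport a line transversal for $X$ through the map $M$ and verify it still transverses the enlarged family at $MX$.

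First I would rewrite the line transversal property in metric form: a line $\ell$ meets the disk $rB + x$ if and only if $\dist(x, \ell) \leq r$. So the hypothesis $T(rB)$ on $X$ gives a single line $\ell$ with $\dist(x, \ell) \leq r$ for every $x \in X$. The natural candidate transversal for $MX$ is then $\ell' := M(\ell)$.

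Second I would verify two things about $\ell'$. Non-degeneracy of the affine map $M$ ensures $M$ sends lines to lines, so $\ell'$ really is a line (not a point). For the distance bound, pick for each $x \in X$ a nearest point $y \in \ell$, so $\|x - y\| = \dist(x, \ell) \leq r$. Applying the contraction hypothesis~\eqref{eq:contraction} yields $\|Mx - My\| \leq \|x - y\| \leq r$, and since $My \in \ell'$ this gives $\dist(Mx, \ell') \leq r$. Equivalently, $\ell' \cap (rB + Mx) \neq \emptyset$ for every $x \in X$, so $\ell'$ transverses the translated family. Hence $MX$ satisfies $T(rB)$.

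There is essentially no substantial obstacle here: the argument is just the observation that distance-to-a-line is non-increasing under non-expansive maps, combined with the metric reformulation of the transversal condition. The only point requiring care is the need for non-degeneracy of $M$, used solely to guarantee that $M(\ell)$ is a line rather than a single point. Finiteness of $X$ plays no role in the argument itself and is only inherited from the context in which $T(rB)$ is defined.
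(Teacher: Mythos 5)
Your proof is correct and follows the same route as the paper: take a transversal line $\ell$ for $X$, map it to $M\ell$, and use the non-expansiveness of $M$ to conclude $\dist(Mx, M\ell) \leq \dist(x, \ell) \leq r$. You spell out the intermediate step via a nearest point $y \in \ell$, which the paper leaves implicit, but the argument is the same.
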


\begin{lem}\label{lem:lipschitz}
Assume that a finite set $X \in \mathbb R^2$ satisfies the property $T(rB)$ for some $r > 0$. Let $\varepsilon > 0$ and let a finite set $Y \subset \mathbb R^2$
satisfy $Y \subset X + \varepsilon B$. Then $Y$ satisfies the property $T((r + \varepsilon)B)$.
\end{lem}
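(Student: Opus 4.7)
The plan is to exhibit a single line transversal for $\{(r+\varepsilon)B + y : y \in Y\}$ using the transversal provided by the hypothesis for $\{rB + x : x \in X\}$. First I would reformulate the transversal property in purely metric terms: a line $\ell$ stabs every member of $\{\rho B + p : p \in P\}$ if and only if $\dist(\ell, p) \leq \rho$ for every $p \in P$. Applying this to the hypothesis on $X$ produces a line $\ell$ with $\dist(\ell, x) \leq r$ for all $x \in X$.

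Next, for each $y \in Y$ the inclusion $Y \subset X + \varepsilon B$ supplies some $x_y \in X$ with $\|y - x_y\| \leq \varepsilon$. Since the point-to-line distance $p \mapsto \dist(\ell, p)$ is $1$-Lipschitz, the triangle inequality gives
\[
\dist(\ell, y) \leq \dist(\ell, x_y) + \|y - x_y\| \leq r + \varepsilon,
\]
so $\ell$ meets $(r+\varepsilon) B + y$. As this holds for every $y \in Y$, the same $\ell$ is a transversal to the inflated family, which is exactly the statement that $Y$ satisfies $T((r+\varepsilon)B)$.

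I do not anticipate any genuine obstacle: the lemma merely reflects that enlarging disks by $\varepsilon$ absorbs any $\varepsilon$-perturbation of the centers in the Hausdorff sense, and the finiteness assumptions on $X$ and $Y$ play no role in the argument beyond matching the conventions used for property $T$ elsewhere in the paper.
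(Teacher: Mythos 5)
Your proof is correct and follows essentially the same route as the paper's: take the line $\ell$ witnessing $T(rB)$ for $X$, then for each $y\in Y$ pick a nearby $x_y\in X$ and use the triangle inequality (equivalently, that $\dist(\ell,\cdot)$ is $1$-Lipschitz) to get $\dist(\ell,y)\le r+\varepsilon$. No gaps.
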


\begin{lem}\label{lem:functional_lipschitz}
Let $\alpha_i, \alpha'_i \in \mathbb R$ ($i = 1, 2, 3, 4$) and $\varepsilon > 0$ be given such that $\max | \alpha_i - \alpha'_i | \leq \varepsilon$. Then
\begin{equation*}
  | F(\alpha_1, \alpha_2, \alpha_3, \alpha_4) - F(\alpha'_1, \alpha'_2, \alpha'_3, \alpha'_4) | < 3 \varepsilon.
\end{equation*}
\end{lem}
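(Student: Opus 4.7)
The plan is to apply the elementary Lipschitz bound $|\cos u - \cos v|\le |u-v|$ to each of the three cosine summands of $F$ and control the resulting sum of argument differences. Set $\delta_i=\alpha_i-\alpha_i'$, so $|\delta_i|\le \varepsilon$, and let $u_k,u_k'$ denote the $k$-th cosine argument of $F$ evaluated at $(\alpha_i)$ and $(\alpha_i')$ respectively. Then each $u_k-u_k'$ is of the form $\tfrac{1}{2}(\pm\delta_1\pm\delta_2\pm\delta_3\pm\delta_4)$, with the three sign patterns $(+,+,-,-)$, $(+,-,+,-)$, $(+,-,-,+)$ coming directly from the definition of $F$.

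By the triangle inequality and the Lipschitz bound for cosine,
\[
|F(\alpha_1,\ldots,\alpha_4)-F(\alpha_1',\ldots,\alpha_4')|\;\le\;\sum_{k=1}^{3}|\cos u_k-\cos u_k'|\;\le\;\sum_{k=1}^{3}|u_k-u_k'|.
\]
The first step of the key computation is to show $\sum_k |u_k-u_k'|\le 3\varepsilon$. A naive triangle inequality inside each $|u_k-u_k'|$ only yields $6\varepsilon$, so one must use the special sign structure. Since $\sum_k|u_k-u_k'|$ is a convex function of $\delta$ on $[-\varepsilon,\varepsilon]^4$, its maximum is attained at a vertex of the cube, and a short enumeration of the $16$ sign choices (halved by the symmetry $\delta\mapsto -\delta$) shows the maximum equals $3\varepsilon$, attained for example at $\delta=(\varepsilon,\varepsilon,\varepsilon,-\varepsilon)$.

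To upgrade to the strict inequality claimed by the lemma, I would use the sharpened identity $|\cos u-\cos v|=2\bigl|\sin\tfrac{u+v}{2}\bigr|\bigl|\sin\tfrac{u-v}{2}\bigr|$, which is strictly less than $|u-v|$ whenever $u\neq v$. If all three pairs satisfy $u_k=u_k'$, then $F(\alpha)=F(\alpha')$ and $|F-F'|=0<3\varepsilon$ since $\varepsilon>0$. Otherwise at least one cosine-difference bound is strict, and summing it with the remaining (non-strict) bounds gives $|F-F'|<\sum_k|u_k-u_k'|\le 3\varepsilon$, as required.

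There is no substantial obstacle here — the argument is a routine Lipschitz/convexity estimate — but the small subtlety worth flagging is the gap between the easy bound $6\varepsilon$ and the required $3\varepsilon$, which forces one to use the specific sign pattern of the three arguments of $F$ rather than term-by-term triangle inequality on the $\delta_i$.
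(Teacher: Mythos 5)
Your proposal is correct and follows essentially the same route as the paper: bound each cosine term by the $1$-Lipschitz property, reduce to maximizing the piecewise-linear convex function $\sum_k |u_k - u_k'|$ over the cube $[-\varepsilon,\varepsilon]^4$ at its vertices (getting $3\varepsilon$), and observe that equality would require both $u_k=u_k'$ for all $k$ (so $|F-F'|=0$) and $\sum_k|u_k-u_k'|=3\varepsilon>0$, which is impossible. Your write-up is merely a bit more explicit about the strictness step than the paper's one-line remark.
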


The proofs are provided in Section~\ref{sec:perturb}.

\subsection{Some a priori bounds}

The results of this subsection will allow us eliminate all ``too long'' elliptical disks from consideration. In other words,
by using the lemmas below we will restrict ourselves to a compact subset of the parameter space.

\begin{lem}\label{lem:2}
Let $r > 2$. Let $X$ be a finite set such that $rB$ is the minimum area elliptical disk that contains $X$.
Then $X$ violates the property $T(B, 3)$.
\end{lem}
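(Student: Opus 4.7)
The plan is to produce three points of $X$ whose inscribed triangle has width strictly greater than $2$. Since three unit disks admit a common line transversal exactly when the triangle formed by their centers fits in a strip of width at most $2$, such a triple violates $T(B)$, hence $X$ violates $T(B,3)$.

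I would first apply the John--Ball criterion (Proposition~\ref{prop:john_ball}) to extract a John subset $\{y_1, \ldots, y_k\} \subseteq X \cap \partial(rB)$ with $k \in \{3,4,5\}$, together with positive weights $c_1, \ldots, c_k$ satisfying $\sum c_i y_i = \mathbf{0}$ and $\sum c_i \, y_i \otimes y_i = r^2 \operatorname{Id}$. By Corollary~\ref{cor:no_large_arcs}, the consecutive angular gaps between the John points are at most $2\pi/3$. The target claim is that some 3-subset of $\{y_1, \ldots, y_k\}$ spans an inscribed triangle of width at least $r$, which, combined with the hypothesis $r > 2$, yields the required triple.

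For $k = 3$, solving the John identity directly forces the three points to be vertices of an equilateral triangle inscribed in $\partial(rB)$, whose shortest altitude equals $3r/2 > 3 > 2$. For $k \in \{4,5\}$, I would write the width of an inscribed triangle formed by three of the John points in half-arc coordinates as $2r\,\sin\phi_a \sin\phi_b$, where $\phi_a, \phi_b$ are the two half-arcs with the smallest sines, so that the target reduces to producing a 3-subset satisfying $\sin\phi_a \sin\phi_b \ge 1/2$. I would establish the existence of such a 3-subset by combining the arc bound from Corollary~\ref{cor:no_large_arcs} with the explicit John identities in Corollaries~\ref{cor:four_point_identity} and~\ref{cor:five_point_ineq}, which together rule out the unbalanced configurations in which every 3-subset yields a small product.

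The main obstacle is this last step. The arc bound alone is insufficient: one can describe 4-tuples on the circle with all consecutive arcs at most $2\pi/3$ whose every 3-subset has arbitrarily small inscribed triangle width. Hence the John identities themselves (as well as the positivity of the $c_i$, which does not follow from the identities alone) must enter the argument in a crucial way, making the $k = 4$ and $k = 5$ cases a delicate but finite trigonometric case analysis.
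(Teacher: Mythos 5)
Your outline matches the paper's strategy step for step: reduce to a John subset $\{y_1,\dots,y_k\}\subset\partial(rB)$ with $k\in\{3,4,5\}$, aim to produce a $3$-subset whose inscribed triangle has width exceeding $2$, dispatch $k=3$ via the equilateral triangle, and handle $k=4,5$ by combining the arc bound of Corollary~\ref{cor:no_large_arcs} with the $F$-identities of Corollaries~\ref{cor:four_point_identity} and~\ref{cor:five_point_ineq} and the formula $h_a = 2R\sin\beta\sin\gamma$. Your observation that the arc bound alone is insufficient (e.g.\ arcs $\epsilon, \tfrac{\pi}{2}-\epsilon, \epsilon, \tfrac{\pi}{2}-\epsilon$ give a $4$-tuple every $3$-subset of which is nearly degenerate, yet $F\approx-1\neq 0$) is correct and is precisely why the John identity must be invoked.

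The genuine gap is that you stop at the step you yourself flag as the ``main obstacle'': for $k=4$ and $k=5$ the argument that some $3$-subset satisfies $\sin\phi_a\sin\phi_b\ge 1/2$ is not supplied, only asserted to be a ``delicate but finite trigonometric case analysis.'' This is the bulk of the proof. In the paper, $k=4$ is resolved by a short but non-obvious use of the identity~\eqref{eq:four_points}: writing $\cos(\phi_2-\phi_4)+\cos(\phi_2+\phi_4)=2\cos\phi_2\cos\phi_4$, one shows $\max(\phi_2,\phi_4)\ge\pi/4$ (else the left side exceeds $1$ while $-\cos(2\phi_1+\phi_2+\phi_4)\le 1$), which combined with $\phi_1\ge\pi/4$ and the arc bound forces all three angles of $\conv\{x_1,x_2,x_3\}$ into $[\pi/4,3\pi/4]$. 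The $k=5$ case is substantially harder and is \emph{not} a static trigonometric check: when $\max\phi_i\ge\pi/4$ the paper introduces a continuous deformation moving points along $\partial(rB)$ toward $x_1$ or $x_2$ until the John condition degenerates to a $4$-point configuration, to which the $k=4$ argument is then applied; only when $\max\phi_i<\pi/4$ does a direct pigeonhole-plus-trigonometry estimate ($\phi_1\ge\pi/5$ and $\sin\phi_1\sin 3\phi_1>1/2$) close the case. The deformation idea is a qualitatively different ingredient that a finite case analysis on fixed arc lengths would not reproduce, so the gap is not merely one of unfinished computation.
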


\begin{rem*}
By Lemma~\ref{lem:2} and Lemma~\ref{lem:contraction}, if $X$ satisfies the property $T(B, 3)$ and $E$ is the elliptical disk of minimum volume containing $X$,
then the smaller radius of $E$ does not exceed 2. Therefore $X$ satisfies the property $T(2B)$. This gives an alternative proof for the
Eckhoff's bound $\lambda(B, 3) \leq 2$.
\end{rem*}

Lemma~\ref{lem:2} is proved in Section~\ref{sec:lem_2_proof}.

\begin{lem}\label{lem:1.5}
Consider the elliptical disk
\begin{equation*}
  E = \left\{ (x, y) \in \mathbb R^2 : \frac{x^2}{3^2} + \frac{y^2}{1.62^2} \leq 1 \right\}.
\end{equation*}
Let $X$ be a finite set such that $E$ is the minimum area elliptical disk that contains $X$.
Then $X$ violates the property $T(B, 3)$.
\end{lem}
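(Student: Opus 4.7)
The plan is to reduce the lemma to a problem on a bounded, finite-dimensional parameter space via the John--Ball criterion, and then dispose of this reduced problem by a combination of direct computation (in the rigid case) and computer-assisted grid search.

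First I would apply Proposition~\ref{prop:john_ball} to the image $MX \subset B$, where $M : \mathbb R^2 \to \mathbb R^2$ is the linear map sending $E$ to the unit disk. This produces a minimal contact subset $Z \subset X \cap \partial E$ of cardinality $k \in \{3, 4, 5\}$ on which the John identities~\eqref{eq:john_ball} hold. Writing $Z = \{(3\cos\alpha_i,\, 1.62\sin\alpha_i)\}_{i=1}^k$ with $0 \leq \alpha_1 < \cdots < \alpha_k < 2\pi$, Corollary~\ref{cor:simplex} turns the John condition into an algebraic constraint on $(\alpha_1, \ldots, \alpha_k)$, made explicit via the functional $F$ in Corollary~\ref{cor:four_point_identity} (for $k = 4$) and via the sign conditions~\eqref{eq:5_functionals} of Corollary~\ref{cor:five_point_ineq} (for $k = 5$). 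Corollary~\ref{cor:no_large_arcs} further bounds every consecutive arc gap $\alpha_{i+1} - \alpha_i$ by $2\pi/3$, so the admissible parameter set is compact.

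Next, for every admissible configuration I would exhibit a triple $\{z_a, z_b, z_c\} \subseteq Z$ whose minimum width exceeds $2$, i.e.
\begin{equation*}
  \min_{u \in S^1} \Bigl(\, \max_{p \in \{a,b,c\}} z_p \cdot u \;-\; \min_{p \in \{a,b,c\}} z_p \cdot u \,\Bigr) > 2.
\end{equation*}
This inequality is precisely the condition that the translates $B + z_a$, $B + z_b$, $B + z_c$ admit no common line transversal, and thereby exhibits a $3$-point subset of $X$ violating $T(B)$, so $X$ fails $T(B, 3)$. In the rigid case $k = 3$, Corollary~\ref{cor:no_large_arcs} forces each arc gap to equal $2\pi/3$ exactly, leaving only the rotational parameter $\alpha_1$ free; the width inequality can then be verified by examining the minimum-width direction in closed form, using crucially that $r_1 = 3$ substantially exceeds the Eckhoff threshold $2$. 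For $k = 4, 5$ the parameter space is higher-dimensional but still compact, and my plan is to tabulate the minimum width on a sufficiently fine grid of tuples $(\alpha_1, \ldots, \alpha_k)$ lying on the John variety, verify the inequality with a small positive safety margin $\eta$ at every grid point, and extrapolate to the full parameter space using Lemma~\ref{lem:functional_lipschitz} (to control the drift off the John variety) together with the elementary Lipschitz dependence of the minimum-width functional on the angles.

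The principal obstacle is that $r_2 = 1.62$ barely exceeds the conjectured extremal value $\frac{1 + \sqrt 5}{2} \approx 1.618$, leaving only a small positive slack $\eta$ in the spread inequality; this forces a rather fine discretization for $k = 4$ and especially $k = 5$, and is precisely why the verification is deferred to Section~\ref{sec:computer_proofs} alongside the computer-assisted parts of Theorem~\ref{thm:grid}. A secondary subtlety is selecting at each configuration the correct triple among the $\binom{k}{3}$ candidates: one expects triples whose angles are close to the equispaced pattern $\alpha, \alpha + 2\pi/3, \alpha + 4\pi/3$ to be most effective, but the optimal triple may change across the parameter space and must be chosen adaptively.
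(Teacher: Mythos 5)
Your plan is correct and follows essentially the same strategy the paper uses: pass to the unit disk via the affine map sending $E$ to $B$, invoke the John--Ball criterion (Proposition~\ref{prop:john_ball} and Corollaries~\ref{cor:simplex}--\ref{cor:five_point_ineq}) to reduce to a compact, finitely parametrized family of boundary configurations, then run a computer-assisted divide-and-conquer over the angle space using Lemma~\ref{lem:lipschitz} for the transversal-width check and Lemma~\ref{lem:functional_lipschitz} to discard cubes missing the John locus. The one genuine packaging difference is that you propose a case split on the contact number $k \in \{3,4,5\}$, reducing to a John \emph{subset} $Z \subseteq X$ and treating $k=3$ in closed form, whereas the paper perturbs to a nearby $5$-point set $X'$ on $\partial E$ satisfying~\eqref{eq:john_ball} and runs a single five-dimensional sweep; your reduction is a bit cleaner logically (a subset rather than a nearby set, so no extra Lipschitz transfer is needed at that step), at the cost of writing separate code for each $k$. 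Two small remarks. First, your ``minimum width $> 2$ for some triple'' criterion is exactly the negation of $T(B,3)$, and the paper's item~(1) in the algorithm performs exactly this check, with the Lipschitz margin $r_1 \pi / n$ absorbing the grid error; so this is the same test. Second, for $k = 3$ your remark that ``$r_1 = 3$ substantially exceeds $2$'' does the rigid case a slight injustice: once Corollary~\ref{cor:no_large_arcs} forces the equispaced pattern, the stretched regular triangle has area $r_1 r_2 \cdot \tfrac{3\sqrt3}{4}$ and longest side at most $r_1 \sqrt 3$, giving minimum height at least $\tfrac{3 r_2}{2} = 2.43 > 2$, so the binding constant is $r_2 > 4/3$, not $r_1$.
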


The proof of Lemma~\ref{lem:1.5} is computer-assisted. As all the other computer-assisted proofs,
it is addressed in Section~\ref{sec:computer_proofs}.

\section{Minimum area elliptic disk: proofs of the key properties}\label{sec:john_proofs}

The aim of this section is to prove Corollaries~\ref{cor:simplex}--\ref{cor:five_point_ineq}.

\begin{proof}[Proof of Corollary~\ref{cor:simplex}]
One can rewrite the identities~\eqref{eq:john_ball} as follows:
\begin{equation}\label{eq:john_ball_expl}
\begin{aligned}
  c_1 \cos \alpha_1 + c_2 \cos \alpha_2 + \ldots + c_k \cos \alpha_k & = 0, \\
  c_1 \sin \alpha_1 + c_2 \sin \alpha_2 + \ldots + c_k \sin \alpha_k & = 0, \\
  c_1 \cos^2 \alpha_1 + c_2 \cos^2 \alpha_2 + \ldots + c_k \cos^2 \alpha_k & = 1, \\
  c_1 \sin^2 \alpha_1 + c_2 \sin^2 \alpha_2 + \ldots + c_k \sin^2 \alpha_k & = 1, \\
  c_1 \sin \alpha_1 \cos \alpha_1 + c_2 \sin \alpha_2 \cos \alpha_2 + \ldots + c_k \sin \alpha_k \cos \alpha_k & = 0.
\end{aligned}
\end{equation}

Taking the difference of the third and the fourth lines of~\eqref{eq:john_ball_expl} yields
\begin{equation*}
  c_1 \cos 2\alpha_1 + c_2 \cos 2\alpha_2 + \ldots + c_k \cos 2\alpha_k = 0.
\end{equation*}
At the same time, multiplying the fifth line of~\eqref{eq:john_ball_expl} by 2 yields
\begin{equation*}
  c_1 \sin 2\alpha_1 + c_2 \sin 2\alpha_2 + \ldots + c_k \sin 2\alpha_k = 0.
\end{equation*}
Aggregately, one concludes that
\begin{equation*}
  \sum\limits_{i = 1}^k c_i (\cos \alpha_i, \sin \alpha_i, \cos 2\alpha_i, \sin 2\alpha_i) = \mathbf 0.
\end{equation*}
This proves the implication $(i) \Rightarrow (ii)$.

Now assume that $(ii)$ holds. Then there are non-negative coefficients $c'_1, c'_2, \ldots, c'_k$, not all of which are zero, such that
\begin{equation*}
  \sum\limits_{i = 1}^k c'_i (\cos \alpha_i, \sin \alpha_i, \cos 2\alpha_i, \sin 2\alpha_i) = \mathbf 0.
\end{equation*}
Then
\begin{gather*}
  c'_1 x_1 + c'_2 x_2 + \ldots + c'_k x_k = \mathbf 0 \quad \text{and} \\
  c'_1 x_1 \otimes x_1 + c'_2 x_2 \otimes x_2 + \ldots + c'_k x_k \otimes x_k = \lambda \operatorname{Id},
\end{gather*}
where $\lambda > 0$. Taking $c_i = \frac{c'_i}{\sqrt{\lambda}}$ proves~\eqref{eq:john_ball} and therefore the implication $(ii) \Rightarrow (i)$.
\end{proof}

\begin{proof}[Proof of Corollary~\ref{cor:no_large_arcs}]
We argue by contradiction. Let the conclusion of Corollary~\ref{cor:no_large_arcs} be false. Then, with no loss of generality, we can assume
that $\alpha_1 = 0$, $\alpha_2 > \frac{2\pi}{3}$.

Then every $\alpha_i$ satisfies the inequality
\begin{equation*}
  \cos(\alpha_i + \pi/3) - \cos(2\alpha_i - \pi/3) \leq 0,
\end{equation*}
and the equality can be achieved only if $\alpha_i = 0$ or $\alpha_i = \frac{4\pi}{3}$. Then all 4-tuples $(\cos \alpha_i, \sin \alpha_i, \cos 2\alpha_i, \sin 2\alpha_i)$
belong to the half-space $\ell(t_1, t_2, t_3, t_4) < 0$, where
\begin{equation*}
  \ell(t_1, t_2, t_3, t_4) = \frac{1}{2} t_1 - \frac{\sqrt{3}}{2} t_2 - \frac{1}{2} t_3 - \frac{\sqrt{3}}{2} t_4.
\end{equation*}
Moreover, if the 4-tuple belongs to the boundary of that half-space, then either $\alpha_i = 0$ or $\alpha_i = \frac{4\pi}{3}$. This contradicts the conclusion of
Corollary~\ref{cor:simplex}.
\end{proof}

\begin{proof}[Proof of Corollary~\ref{cor:four_point_identity}]
With no loss of generality, assume from the very beginning that $\alpha_1 < \alpha_2 < \alpha_3 < \alpha_4 < \alpha_1 + 2\pi$.

Corollary~\ref{cor:simplex} implies that the affine dimension of the 5-tuple of points
\begin{equation*}
  \{ \mathbf 0 \} \cup \{ (\cos \alpha_j, \sin \alpha_j, \cos 2\alpha_j, \sin 2\alpha_j) : j = 1, 2, 3, 4 \}
\end{equation*}
cannot be equal to 4. Therefore
\begin{equation*}
\Delta(\alpha_1, \alpha_2, \alpha_3, \alpha_4) = \left|
  \begin{array}{cccc}
    \cos \alpha_1 & \cos \alpha_2 & \cos \alpha_3 & \cos \alpha_4 \\
    \sin \alpha_1 & \sin \alpha_2 & \sin \alpha_3 & \sin \alpha_4 \\
    \cos 2\alpha_1 & \cos 2\alpha_2 & \cos 2\alpha_3 & \cos 2\alpha_4 \\
    \sin 2\alpha_1 & \sin 2\alpha_2 & \sin 2\alpha_3 & \sin 2\alpha_4 \\
  \end{array}
\right| = 0.
\end{equation*}
One can check that
\begin{multline*}
  \Delta = C \cdot \frac{e_1^2e_2^2 + e_1^2e_3^2 + e_1^2e_4^2 + e_2^2e_3^2 + e_2^2e_4^2 + e_3^2e_4^2}{e_1e_2e_3e_4} \times \\
  \frac{(e_1^2 - e_2^2)(e_1^2 - e_3^2)(e_1^2 - e_4^2)(e_2^2 - e_3^2)(e_2^2 - e_4^2)(e_3^2 - e_4^2)}{(e_1e_2e_3e_4)^3},
\end{multline*}
where $C \in \mathbb R$ is a fixed constant and $e_j = \exp (i \alpha_j / 2)$. But if $j < j'$, then $\alpha_{j'} - \alpha_j \in (0, 2\pi)$. Therefore
\begin{equation*}
  e_j^2 - e_{j'}^2 \in i \mathbb R_{-} \cdot e_j e_{j'}.
\end{equation*}
Consequently, the fraction $\frac{(e_1^2 - e_2^2)(e_1^2 - e_3^2)(e_1^2 - e_4^2)(e_2^2 - e_3^2)(e_2^2 - e_4^2)(e_3^2 - e_4^2)}{(e_1e_2e_3e_4)^3}$
attains only real negative values, hence the sign of $\Delta(\alpha_1, \alpha_2, \alpha_3, \alpha_4)$ is completely determined by the sign of
\begin{equation*}
  \frac{e_1^2e_2^2 + e_1^2e_3^2 + e_1^2e_4^2 + e_2^2e_3^2 + e_2^2e_4^2 + e_3^2e_4^2}{e_1e_2e_3e_4} = 2F(\alpha_1, \alpha_2, \alpha_3, \alpha_4).
\end{equation*}
(Of course, the last expression is a real number.) In particular, if $\Delta(\alpha_1, \alpha_2, \alpha_3, \alpha_4) = 0$, then
$F(\alpha_1, \alpha_2, \alpha_3, \alpha_4) = 0$, as required.
\end{proof}

\begin{proof}[Proof of Corollary~\ref{cor:five_point_ineq}]
We have to check whether the origin $\mathbf 0 \in \mathbb R^4$ belongs to the interior of the simplex
\begin{equation*}
  \conv \{ (\cos \alpha_j, \sin \alpha_j, \cos 2\alpha_j, \sin 2\alpha_j) : j = 1, 2, \ldots, 5 \}.
\end{equation*}
The necessary and sufficient condition is that the determinants
\begin{equation*}
  \Delta(\alpha_1, \alpha_2, \alpha_3, \alpha_4), \Delta(\alpha_5, \alpha_1, \alpha_2, \alpha_3), \ldots, \Delta(\alpha_2, \alpha_3, \alpha_4, \alpha_5)
\end{equation*}
have the same sign. Equivalently, the 5 values~\eqref{eq:5_functionals} have the same sign. (The equivalence is established similarly to the proof
of Corollary~\ref{cor:four_point_identity}).
\end{proof}

\section{Perturbation lemmas: the proofs}\label{sec:perturb}

The goal of this section is to prove Lemmas~\ref{lem:contraction}--\ref{lem:functional_lipschitz}.

\begin{proof}[Proof of Lemma~\ref{lem:contraction}]
Let $l$ be a line such that $\dist (x, l) \leq r$ for every $x \in X$. Such a line exists because $X$ satisfies the property $T(rB)$.
Then $\dist (Mx, Ml) \leq \dist(x, l) \leq r$, therefore the line $Ml$ intersects every translate $rB + y$, where $y$ runs through the set $MX$.
Hence the set $MX$ indeed satisfies the property $T(rB)$.
\end{proof}

\begin{proof}[Proof of Lemma~\ref{lem:lipschitz}]
Let $l$ be a line such that $\dist (x, l) \leq r$ for every $x \in X$. Such a line exists because $X$ satisfies the property $T(rB)$.

Let $y \in Y$. Then there exists a point $x \in X$ such that $\| x - y \| \leq \varepsilon$. Consequently, $\dist(y, l) \leq \dist(x, l) + \varepsilon \leq r + \varepsilon$.
Thus the line $l$ intersects every translate $(r + \varepsilon)B + y$, where $y$ runs through the set $Y$. Therefore
Hence the set $Y$ indeed satisfies the property $T((r + \varepsilon)B)$.
\end{proof}

\begin{proof}[Proof of Lemma~\ref{lem:functional_lipschitz}]
Let $\beta_i = \alpha'_i - \alpha$. Then
\begin{equation*}
\begin{aligned}
  & | F(\alpha_1, \alpha_2, \alpha_3, \alpha_4) - F(\alpha'_1, \alpha'_2, \alpha'_3, \alpha'_4) | \\
  \leq & \frac{1}{2}(|\beta_1 + \beta_2 - \beta_3 - \beta_4| + |\beta_1 - \beta_2 + \beta_3 - \beta_4| + |\beta_1 - \beta_2 - \beta_3 + \beta_4|) \\
  \leq & \frac{1}{2} \cdot 6 \varepsilon = 3\varepsilon,
\end{aligned}
\end{equation*}
as required.

Indeed, the expression in the second line is convex in $(\beta_1, \beta_2, \beta_3, \beta_4)$, therefore it is sufficient to check the inequality
only at the vertices of the cube $[-\varepsilon, \varepsilon]^4$. It is also clear that the inequalities cannot turn into equalities simultaneously.
\end{proof}

\section{The minimum area elliptical disk has width $\leq 2$}\label{sec:lem_2_proof}

In this section we prove Lemma~\ref{lem:2}. Once the proof is complete,
we immediately conclude that Conjecture~\ref{conj:finite} holds whenever $r_1 > 2$. Indeed, in this case we necessarily have $R(X) = \varnothing$ (see the remark after Lemma~\ref{lem:2}).

\begin{proof}[Proof of Lemma~\ref{lem:2}]
With no loss of generality we can assume that $X \subseteq \partial(rB)$ and $\# X \leq 5$. Indeed, if this is not the case, apply a sufficiently small perturbation to $X$, yielding a set
$X'$ with no 6 point lying on the same ellipse. If $E$ is the minimum area elliptical disk containing $X'$, consider the affine map $F$ such that
$F(E) = \frac{2 + r}{2} \cdot B$. Clearly, $F$ is a contraction (if $X$ and $X'$ are sufficiently close to each other). Therefore, by Lemma~\ref{lem:contraction},
it will be sufficiently to argue for $F(X') \cap F(E)$ instead of $X$ and for $\frac{2 + r}{2}$ instead of $r$.

We proceed by case analysis.

\noindent{\bf Case 1.} $\# X = 3$. By condition of the lemma, the ball $rB$ is the minimum area ellipsoid containing the triangle $\conv X$. By a well-known fact [ref:],
this is only possible if $\conv X$ is a regular triangle. But then each height of $\conv X$ equals $\frac{3r}{2} > 3 > 2$, which contradicts the $T(B, 3)$ property.

\noindent{\bf Case 2.} $\# X = 4$. In the notation of Corollary~\ref{cor:four_point_identity}, we can, with no loss of generality, assume that $\phi_1 = \max(\phi_1, \phi_2, \phi_3, \phi_4)$.
In particular, since $\phi_1 + \phi_2 + \phi_3 + \phi_4 = \pi$, we have $\phi_1 \geq \frac{\pi}{4}$

Now we claim that
\begin{equation}\label{eq:max_at_least_quarter_pi}
  \max(\phi_2, \phi_4) \geq \frac{\pi}{4}.
\end{equation}
Indeed, otherwise
\begin{equation*}
  \cos(\phi_2 - \phi_4) + \cos(\phi_2 + \phi_4) = 2 \cos \phi_2 \cos \phi_4 > 2 \cos^2 \frac{\pi}{4} = 1 \geq - \cos(2 \phi_1 + \phi_2 + \phi_4),
\end{equation*}
which contradicts~\eqref{eq:four_points}. The claim~\eqref{eq:max_at_least_quarter_pi} is proved. With no loss of generality, let $\phi_2 \geq \frac{\pi}{4}$.

Finally, using Corollary~\ref{cor:no_large_arcs}, we get
\begin{equation*}
  \frac{2\pi}{3} \geq \phi_1 + \phi_2 = \pi - (\phi_3 + \phi_4) \geq \pi - \frac{2\pi}{3} = \frac{\pi}{3}.
\end{equation*}

From the above we conclude that each angle of the triangle $T = \conv \{x_1, x_2, x_3 \}$ belongs to the range $\left[\frac{\pi}{4}, \frac{3\pi}{4} \right]$.
A standard formula from elementary geometry $h_a = 2R \sin \beta \sin \gamma$ [ref:], where $R = r$ is the radius of the circumcircle of $T$
yields that each height of $T$ is at least $2r \sin^2 \frac{\pi}{4} = r > 2$. This contradicts the $T(B, 3)$ property of $X$.

\noindent{\bf Case 3.} $\# X = 5$. Let the points of $X$ be enumerated as $x_1, x_2, \ldots, x_5$ so that the polygonal line $x_1 x_2 \ldots x_5 x_1$ is the boundary of the convex pentagon
$\conv X$. Let, finally, $\phi_i$ ($i = 1, 2, \ldots, 5$) be half the central angular measure of the arc $x_i x_{i + 1}$ ($i_6 = i_1$) of $\partial (rB)$
that does not contain other points $x_j$. We consider two subcases.

\noindent{\bf Subcase 3.1.} $\max(\phi_1, \phi_2, \ldots, \phi_5) \geq \frac{\pi}{4}$. With no loss of generality assume that $\phi_1 \geq \frac{\pi}{4}$
Let $y_1, y_2 \in \partial (rB)$ be two points such that the polygonal line $x_1 x_2 y_2 y_1 x_1$ bounds a convex quadrangle, and the arcs $x_1y_1$
and $x_2y_2$ have central angular measure $\frac{\pi}{2}$ each. There are two smaller subcases.

\noindent{\bf Subcase 3.1.1.} The arc $y_1y_2$ (the one that does not contain $x_1$ and $x_2$) contains no points of $X$. Then $X$ is contained in the union of
arcs $x_1y_1$ and $x_2y_2$. Let us start moving all points of $X$ simultaneously over $\partial (rB)$ towards either $x_1$ or $x_2$, depending on which of the two
arcs the particular point belongs to, until John's condition degenerates. (Clearly, John's condition will degenerate before all points of $X$ arrive at either $x_1$ or $x_2$.)

At the moment John's condition degenerates, the modified set $X$ contains a 4-tuple $\{ z_1, z_2, z_3, z_4 \}$ satisfying the condition of Corollary~\ref{cor:four_point_identity}.
By Corollary~\ref{cor:no_large_arcs} one concludes that exactly two of the points $z_i$ (say, $z_1$ and $z_2$) belong to the arc $x_1y_1$, while the other two belong to the arc
$x_2y_2$. But this produces a contradiction, similarly to the proof of~\eqref{eq:max_at_least_quarter_pi}.

\noindent{\bf Subcase 3.1.2.} Some point $x_j$ belongs to the arc $y_1y_2$. Then each angle of the triangle $\conv \{ x_1, x_2, x_j \}$ is between $\frac{\pi}{4}$ and $\frac{3\pi}{4}$.
Similarly to Case 2, the property $T(B, 3)$ does not hold for $X$.

\noindent{\bf Subcase 3.2.} $\max(\phi_1, \phi_2, \ldots, \phi_5) < \frac{\pi}{4}$. With no loss of generality, let $\phi_1 = \max(\phi_1, \phi_2, \ldots, \phi_5)$. By
the Pigeonhole Principle, $\phi_1 \geq \frac{\pi}{5}$.

Consider the triangle $\conv \{ x_1, x_2, x_4 \}$. Let $\alpha$ be the angle at $x_2$. Then
\begin{equation*}
  \frac{\pi}{2} \geq \phi_4 + \phi_5 = \alpha = \pi - \phi_1 - \phi_2 - \phi_3 \geq \pi - 3 \phi_1.
\end{equation*}
Consequently, the height $h_{1,24}$ of the triangle from $x_1$ satisfies
\begin{equation*}
  h_{1, 24} = 2r \sin \phi_1 \sin \alpha \geq 2r \sin \phi_1 \sin 3\phi_1 > 2.
\end{equation*}
Similarly, $h_{2, 14} > 2$. Finally, $h_{4, 12} \geq 2r \sin^2 3\phi_1 > 2$. Again, $X$ violates the property $T(B, 3)$. \end{proof}

\section{Statements with computer-assisted proofs: the algorithms}\label{sec:computer_proofs}

The proofs of Lemma~\ref{lem:1.5} and parts (b)--(c) of Theorem~\ref{thm:grid} are computer-assisted. In this section we describe our approach towards the proof.

We claim that each of the statements has to be verified for a finite number of pairs $(r_1, r_2)$. Indeed, Lemma~\ref{lem:1.5} concerns a particular pair $(3, 1.62)$.
For Theorem~\ref{thm:grid} the case $r_2 < 1.62$ is immediate, while the case $r_2 > 2$ is impossible due to Lemma~\ref{lem:2}. Hence we are interested only in
the values $1.62 \leq r_2 \leq 2$. But after proving Lemma~\ref{lem:1.5}, we conclude that the case $r_1 > 3$ is impossible. Thus
\begin{equation*}
  (r_1, r_2) \in (0.15 \mathbb Z^2) \cap ([1.62, 3] \times [1.62, 2]),
\end{equation*}
which leaves us with a finite set of pairs to consider. Each pair $(r_1, r_2)$ is considered separately, therefore in the follow-up we assume that $r_1$ and $r_2$ are fixed.

Denote
\begin{equation*}
  I^p_n = \left[  \frac{2p \pi}{n}, \frac{2(p + 1) \pi}{n} \right], \qquad (p = 0, 1, \ldots, n - 1).
\end{equation*}
In order to prove each of the results, we use the so-called {\it divide-and-conquer} technique. The particular details are provided below.

\subsection{Lemma~\ref{lem:1.5}}
It is sufficient to consider the case $\# X = 5$ with the additional assumption that the set $NX$ satisfies~\eqref{eq:john_ball}, where $N$ is an affine map such that $NE = B$.
Indeed, the case $\# X > 5$ is ruled out by a small generic perturbation. In turn, for the case $\# X \leq 5$ there is a set $X' \subset \partial E$ such that $\# X' = 5$,
$NX'$ satisfies~\eqref{eq:john_ball} and each point of $X'$ is arbitrarily close to some point of $X$.

Denote
\begin{equation*}
  Q(p_1, p_2, \ldots , p_5; n) = I^{p_1}_n \times I^{p_2}_n \times \ldots \times I^{p_5}_n.
\end{equation*}
We start with $n = n_0 = 60$. Consider the set $\mathcal Q_0$ of all cubes $Q(p_1, p_2, \ldots , p_5; n_0)$ such that $0 \leq p_1 \leq p_2 \leq \ldots \leq p_5 < n_0$.

For each fixed $Q = Q(p_1, p_2, \ldots , p_5; n_0) \in \mathcal Q_0$ denote $\beta_i = \frac{2(p_i + 1/2) \pi}{n_0}$. Then one of the following holds:
\begin{enumerate}
  \item The set $\{ ( r_1 \cos \beta_i, r_2 \sin \beta_i) : i = 1, 2, \ldots, 5 \}$
        violates the property $T\left( \left(1 + r_1 \frac{\pi}{n} \right)B , 3 \right)$. Then, by Lemma~\ref{lem:lipschitz}, every 5-tuple $(\alpha_1, \alpha_2, \ldots, \alpha_5) \in Q$
        violates the property $T(B, 3)$.
  \item Consider the five values as in~\eqref{eq:5_functionals} with the arguments $\beta_1, \beta_2, \ldots, \beta_5$. If the largest of those values, $F_{\max}$ and the smallest, $F_{\min}$,
        satisfy
        \begin{equation*}
          F_{\max} > \frac{3\pi}{n}, \qquad F_{\min} < -\frac{3\pi}{n},
        \end{equation*}
        then no 5-tuple $\{ ( \cos \alpha_i, \sin \alpha_i) : i = 1, 2, \ldots, 5 \}$, where $(\alpha_1, \alpha_2, \ldots, \alpha_5) \in Q$ and $\alpha_1 < \alpha_2 < \ldots < \alpha_5$,
        satisfies~\eqref{eq:john_ball}.
  \item None of the above holds. Then we include all the cubes
        \begin{equation*}
          \{ Q(p'_1, p'_2, \ldots , p'_5; 2n_0) :  p'_i \in \{ 2p_i, 2p_i + 1 \}, p'_1 \leq p'_2 \leq \ldots \leq p'_5 \}
        \end{equation*}
        in the new set $\mathcal Q_1$.
\end{enumerate}
We apply the same procedure to the set $\mathcal Q_1$ and $n = n_1 = 2n_0$. Then we continue in the same fashion with $(\mathcal Q_2, n_2)$, etc. One obtains that $\mathcal Q_6 = \varnothing$,
which immediately implies Lemma~\ref{lem:1.5}.

\subsection{Theorem~\ref{thm:grid}, part (b)}
The setting of Theorem~\ref{thm:grid}, part (b) refers to 4-tuples of points. Therefore we consider the 4-dimensional cubes
\begin{equation*}
  Q(p_1, p_2, p_3, p_4; n) = I^{p_1}_n \times I^{p_2}_n \times \ldots \times I^{p_5}_n.
\end{equation*}
We start with $n = n_0 = 120$. Consider the set $\mathcal Q_0$ of all cubes $Q(p_1, p_2, p_3, p_4; n_0)$ such that $0 \leq p_1 \leq p_2 \leq p_3 \leq p_4 < n_0$.

For each fixed $Q = Q(p_1, p_2, p_3, p_4; n_0) \in \mathcal Q_0$ denote $\beta_i = \frac{2(p_i + 1/2) \pi}{n}$. Then one of the following holds:
\begin{enumerate}
  \item The set $Z = \{ ( r_1 \cos \beta_i, r_2 \sin \beta_i) : i = 1, 2, \ldots, 4 \}$
        violates the property $T\left( \left(1 + r_1 \frac{\pi}{n} \right)B , 3 \right)$. Then, by Lemma~\ref{lem:lipschitz}, every 5-tuple $(\alpha_1, \alpha_2, \ldots, \alpha_5) \in Q$
        violates the property $T(B, 3)$.
  \item $|F(\beta_1, \beta_2, \beta_3, \beta_4)| > \frac{3\pi}{n}$. Then every 4-tuple $Z' = \{ ( r_1\cos \alpha_i, r_2\sin \alpha_i) : i = 1, 2, 3, 4 \}$,
        where $(\alpha_1, \alpha_2, \alpha_3, \alpha_4) \in Q$ and $\alpha_1 < \alpha_2 < \alpha_3 < \alpha_4$, admits an elliptical disk $E' \supset Z'$ such that $|E'| < |E|$.
  \item $\beta_{i + 1} - \beta_i > \frac{2 \pi}{3}$ for some $i \in \{ 1, 2, 3, 4 \}$, where $\beta_5 = \beta_1 + 2\pi$. Then, with $\alpha_i$  and $Z'$ as above, the condition of
        Corollary~\ref{cor:no_large_arcs} is violated, thus $Z'$ admits an elliptical disk $E' \supset Z'$ such that $|E'| < |E|$. (Here we use that $n$ is a multiple of 3.)
  \item With $Z$ as above, the set $E \cap R\left( Z, r_1\frac{\pi}{n} \right)$ satisfies the property $T \left( \frac{\sqrt{5} + 1}{2} B \right)$, where
        \begin{equation*}
          R(Z, \varepsilon) = \{ x \in \mathbb R^2 : \text{$Z \cup \{x\}$ satisfies the property $T((1 + \varepsilon)B, 3)$} \}.
        \end{equation*}
        Then $R(Z') \subseteq R(Z, \varepsilon)$, hence the conclusion of Theorem~\ref{thm:grid}, part (b) holds whenever
        $(\alpha_1, \alpha_2, \alpha_3, \alpha_4) \in Q$ and $\alpha_1 < \alpha_2 < \alpha_3 < \alpha_4$.
  \item None of the above holds. Then we include all the cubes
        \begin{equation*}
          \{ Q(p'_1, p'_2, p'_3, p'_4; 2n_0) :  p'_i \in \{ 2p_i, 2p_i + 1 \}, p'_1 \leq p'_2 \leq p'_3 \leq p'_4 \}
        \end{equation*}
        in the new set $\mathcal Q_1$.
\end{enumerate}
We apply the same procedure to the set $\mathcal Q_1$ and $n = n_1 = 2n_0$. Then we continue in the same fashion with $(\mathcal Q_2, n_2)$, etc. One obtains that $\mathcal Q_5 = \varnothing$,
which immediately implies Theorem~\ref{thm:grid}, part (b).

\subsection{Theorem~\ref{thm:grid}, part (c)}
The algorithm repeats the one from the previous subsection with the following changes.

\begin{enumerate}
  \item Five-dimensional cubes are used instead of four-dimensional, since this part of Theorem~\ref{thm:grid} refers to 5-tuples of points.
  \item The condition for the minimality of $E$ is treated similarly to Lemma~\ref{lem:1.5}.
  \item Having obtained the pair $(\mathcal Q_4, n_4)$, we observe that $n_4 = 1920$. Therefore for every each cube $Q \in \mathcal Q_4$ it is sufficient to check its center.
        This is accomplished straightforwardly.
\end{enumerate}

\section{Proof of Theorem~\ref{thm:grid}, part (a)}\label{sec:grid_a}

If $r_2 \leq \frac{\sqrt{5} + 1}{2}$, then $E$ satisfies the property $T\left( \frac{\sqrt{5} + 1}{2} B \right)$. We will show that the case $r_2 > \frac{\sqrt{5} + 1}{2}$
is impossible. Namely, since $\# Z = 3$, it will be sufficient to show that $Z$ violates the property $T(B)$.

Let $N$ be an affine map such that $NE = \frac{\sqrt{5} + 1}{2} B$. $N$ satisfies~\eqref{eq:contraction}, hence it is sufficient to prove that $NZ$ violates the property $T(B)$.
But this is clear, because $\conv NZ$ is a regular triangle of height $\frac{3}{2} \cdot \frac{\sqrt{5} + 1}{2} > 2$.

\section{Reduction of Theorem~\ref{thm:estimate} to Theorem~\ref{thm:grid}}\label{sec:proof_est}

Now, assuming that Theorem~\ref{thm:grid} is verified, we turn to the proof of Theorem~\ref{thm:estimate}.

\begin{proof}[Proof of Theorem~\ref{thm:estimate}]
We argue by contradiction. Assume that Theorem~\ref{thm:estimate} is false. Then there exists a set $X_0$ and a constant $\varepsilon > 0$
such that $X$ satisfies the property $T(B, 3)$, but does not satisfy the property $T((c + \varepsilon)B)$. Then the set
$X_1 = \frac{c + \varepsilon / 2}{c + \varepsilon} \cdot X_0$ satisfies the property $T\left(\frac{c + \varepsilon / 2}{c + \varepsilon} \cdot B, 3 \right)$,
but not the property $T((c + \varepsilon / 2)B)$. Finally, every sufficiently small perturbation $X$ of the set $X_1$
satisfies the property $T(B, 3)$, but does not satisfy the property $T(cB)$. As in the previous section, one can choose $X$ to be sufficiently generic to
guarantee that no ellipse passes through six different points of $X$.

Let $E$ be the elliptical disk of minimal area containing $X$. Consider an arbitrary affine map $N$ such that $NE = B$. Then there exists a finite subset
$Z = \{ z_1, z_2, \ldots, z_k \} \subseteq X \cap \partial E$ and positive coefficients $c_1, c_2, \ldots, c_k$ such that the identities~\eqref{eq:john_ball}
hold for $x_i = Nz_i$. Of course, $k \leq 5$, because we assume $X$ to be generic. On the other hand, $k \geq 3$, since~\eqref{eq:john_ball} cannot
hold for $k = 1, 2$. As in Conjecture~\ref{conj:finite}, we use the notation $R(Z)$ defined by~\eqref{eq:r}.
Now we proceed by case analysis.

\noindent{\bf Case 1.} $k = 3$. Let $Z = X \cap \partial E$. Using case (a) of Theorem~\ref{thm:grid}, we conclude that the set $E \cap R(Z)$ satisfies the property $T\left( \frac{\sqrt{5} + 1}{2} \cdot B \right)$.
But $X \subseteq E \cap R(Z)$. Therefore the set $X$ satisfies the property $T\left( \frac{\sqrt{5} + 1}{2} \cdot B \right)$, too.

\noindent{\bf Case 2.} $k = 4$. Consider two subcases.

\noindent{\bf Subcase 2.1.} $r_2 \leq 1.62$. In this subcase it is clear that $E$ satisfies the property $T(1.62 B)$. Since $X \subset E$, the set $X$ satisfies the property $T(1.62 B)$, too.

\noindent{\bf Subcase 2.2.} $r_2 > 1.62$. Let $r'_1, r'_2 \in 0.015 \mathbb Z$ satisfy
\begin{equation*}
  r'_1 \leq r_1 < r'_1 + 0.015, \qquad r'_2 \leq r_2 < r'_2 + 0.015.
\end{equation*}
Consider the following affine maps $M_1$ and $M_2$:
\begin{equation*}
(x, y) \xmapsto{M_1} \left( \frac{r'_1}{r_1}x, \frac{r'_2}{r_2}y \right), \quad (x, y) \xmapsto{M_2} \left( \frac{r'_2 + 0.015}{r'_2}x, \frac{r'_2 + 0.015}{r'_2}y \right).
\end{equation*}
Consider an arbitrary triple $\{x_1, x_2, x_3 \} \subseteq X$. By condition of the lemma, it satisfies the property $T(B)$. Since $M_1$ satisfies~\eqref{eq:contraction},
the triple $\{M_1 x_1, M_1 x_2, M_1 x_3 \}$ satisfies the property $T(B)$, too. Therefore the set $M_1X$ satisfies the property $T(B, 3)$. Hence $M_1X \subset M_1 E \cap R(M_1 Z)$.

By Theorem~\ref{thm:grid}, case (b), the set $M_1 E \cap R(M_1 Z)$ satisfies the property $T\left( \frac{\sqrt{5} + 1}{2} \cdot B \right)$. Therefore the set $M_1X$ satisfies the property
$T\left( \frac{\sqrt{5} + 1}{2} \cdot B \right)$.

Consequently, the set $(M_2 M_1)X$ satisfies the property $T\left(\frac{r'_2 + 0.015}{r'_2} \cdot \frac{\sqrt{5} + 1}{2} \cdot B \right)$. Since
\begin{equation*}
  \frac{r'_2 + 0.015}{r'_2} \cdot \frac{\sqrt{5} + 1}{2} \leq \frac{1.635}{1.62} \cdot \frac{\sqrt{5} + 1}{2} < 1.635,
\end{equation*}
we conclude that $(M_2 M_1)X$ satisfies the property $T(1.635 B)$.

Finally, the map $(M_2 M_1)^{-1}$ satisfies~\eqref{eq:contraction}. Hence the set $X$ satisfies the property $T(1.635 B)$.

\noindent{\bf Case 3.} $k = 5$. Consider three subcases.

\noindent{\bf Subcase 3.1.} $r_2 \leq 1.645$. By definition of the subcase, $E$ satisfies the property $T(1.645 B)$. But $X \subset E$, hence the set $X$ satisfies the property $T(1.645 B)$, too.

\noindent{\bf Subcase 3.2.} $r_2 > 1.645$, $r_1 > 3$. Consider the map $M$ defined by
\begin{equation*}
  (x, y) \xmapsto{M} \left( \frac{3}{r_1}x, \frac{1.62}{r_2}y \right).
\end{equation*}
The map $M$ satisfies~\eqref{eq:contraction}, and the set $X$ satisfies the property $T(3, B)$. Therefore, by Lemma~\ref{lem:contraction}, $MX$ satisfies the property $T(3, B)$ as well.
But this is a contradiction to Lemma~\ref{lem:1.5}, hence the subcase is impossible.

\noindent{\bf Subcase 3.3.} $1.645 < r_2 \leq r_1 \leq 3$. We need some further notation. Define the constants $r'_1, r'_2 \in 0.015 \mathbb Z$ by the inequalities
\begin{equation*}
  r'_1 \leq 0.995 r_1 < r'_1 + 0.015, \qquad r'_2 \leq 0.995 r_2 < r'_2 + 0.015.
\end{equation*}
We introduce the maps $M_1$ and $M_2$ defined by
\begin{equation*}
(x, y) \xmapsto{M_1} \left( \frac{r'_1}{r_1}x, \frac{r'_2}{r_2}y \right), \quad (x, y) \xmapsto{M_2} \left( \frac{r'_2 + 0.015}{0.995 r'_2}x, \frac{r'_2 + 0.015}{0.995 r'_2}y \right).
\end{equation*}
If $N$ is the map defined by
\begin{equation*}
  (x, y) \xmapsto{N} \left( \frac{1}{r_1}x, \frac{1}{r_2}y \right),
\end{equation*}
then the identity $NE = B$ holds. Then, by definition of the subcase, there is a subset $Z = \{ z_1, z_2, z_3, z_4, z_5 \} \in X \cap \partial E$ such that
the points $x_i = Nz_i$ satisfy~\eqref{eq:john_ball}. The points $x_i$ can be parameterized by the parameters $\alpha_i$ so
that $x_i = (\cos \alpha_i, \sin \alpha_i)$. With no loss of generality, assume that
\begin{equation*}
  0 \leq \alpha_1 < \alpha_2 < \ldots < \alpha_5 < 2\pi.
\end{equation*}
Define $\alpha'_i \in \frac{\pi}{960} (\mathbb Z + 1/2)$ ($i = 1, 2, \ldots, 5$) by the inequalities
\begin{equation*}
  \alpha_i - \frac{\pi}{1920} \leq \alpha'_i < \alpha_i + \frac{\pi}{1920}.
\end{equation*}
Finally, for every $t \in [0, 1]$ let
\begin{equation*}
  \alpha_i(t) = (1 - t)\alpha_i + t \alpha'_i, \qquad x_i(t) = (\cos \alpha_i(t), \sin \alpha_i(t)).
\end{equation*}
Now consider two subcases.

\noindent{\bf Subcase 3.3.1.} For every $t \in [0, 1]$ there exist positive coefficients $c_i(t)$ ($i = 1, 2, \ldots, 5$) such that substitution of $x_i(t)$ instead of $x_i$
and $c_i(t)$ instead of $c_i$ turns the identities~\eqref{eq:john_ball} into correct ones. By definition of $r'_1$ and $r'_2$,
one has $\max \left( \frac{r'_1}{r_1}, \frac{r'_2}{r_2} \right) \leq 0.995$. Therefore the set $M_1X$ satisfies the property $T(0.995B, 3)$. Next,
\begin{equation*}
  \| M_1 z_i - M_1(N^{-1} x_i(1) \| \leq r'_1 (\alpha_i - \alpha'_i) < \frac{3\pi}{1920} < 0.005.
\end{equation*}
Thus, by Lemma~\ref{lem:lipschitz}, the set $Y = M_1 (X \cup \{ N^{-1} x_1(1), N^{-1} x_2(1), \ldots, N^{-1} x_5(1) \})$ satisfies the property $T(B, 3)$.
But $M_1N^{-1} x_1(1) \in Y$, therefore the case (c) of Theorem~\ref{thm:grid} is applicable. Hence the set $M_1 X \supseteq Y$ satisfies the property $T\left( \frac{\sqrt{5} + 1}{2} \cdot B \right)$.

Consequently, the set $(M_2 M_1)X$ satisfies the property $T\left(\frac{r'_2 + 0.015}{0.995 r'_2} \cdot \frac{\sqrt{5} + 1}{2} \cdot B \right)$. Since
\begin{equation*}
  \frac{r'_2 + 0.015}{0.095 r'_2} \cdot \frac{\sqrt{5} + 1}{2} \leq \frac{1.635}{0.995 \cdot 1.62} \cdot \frac{\sqrt{5} + 1}{2} < 1.645,
\end{equation*}
we conclude that $(M_2 M_1)X$ satisfies the property $T(1.645 B)$.

Finally, the map $(M_2 M_1)^{-1}$ satisfies~\eqref{eq:contraction}. Hence the set $X$ satisfies the property $T(1.645 B)$.

\noindent{\bf Subcase 3.3.2.} The condition of Subcase 3.3.1 does not hold for $t \in T$, where $T \in (0, 1]$ is a non-empty set. (By definition, $x_i(0) = x_i$, hence
the condition necessarily holds for $t = 0$.) Corollary~\ref{cor:simplex} immediately implies that for $t_0 = \inf T$ there is a proper subset
$X_0 \subsetneq \{x_1(t_0), x_2(t_0), \ldots, x_5(t_0) \}$ such that $B$ is the minimum area elliptical disk containing $X_0$.

Similarly to Subcase 3.3.1, the set $Y = M_1 (X \cup N^{-1} X_0)$ satisfies the property $T(B, 3)$. But we have either $\# X_0 = 3$ or $\# X_0 = 4$,
therefore either case (a) or case (b) of Theorem~\ref{thm:grid} is applicable. Hence the set $M_1 X \supseteq Y$ satisfies the property $T\left( \frac{\sqrt{5} + 1}{2} \cdot B \right)$.
The rest of the argument proceeds exactly as in Subcase 3.3.1.
\end{proof}


\begin{thebibliography}{10}

\bibitem{Bal92} Ball, K. (1992). Ellipsoids of maximal volume in convex bodies. Geometriae Dedicata, 41(2), 241--250.

\bibitem{Eck69} Eckhoff, J. (1969). Transversalenprobleme vom Gallai'schen Typ. Dissertation, Georg-August-Univer\-sit\"at G\"ottingen. {\it (In German.)}

\bibitem{Eck16} Eckhoff, J. (2016). Problems in Discrete Geometry. In: Convexity and Discrete Geometry Including Graph Theory (pp. 269--273). Springer.

\bibitem{Grr11} Gruber, P. M. (2011). John and Loewner ellipsoids. Discrete \& Computational Geometry, 46(4), 776--788.

\bibitem{Gru58} Gr\"unbaum, B. (1958). On common transversals. Archiv der Mathematik, 9(6), 465--469.

\bibitem{Hdb17} Handbook of Discrete and Computational Geometry (2017). 3rd edition, eds.: Cs. D. T\'oth, J. O'Rourke, and J. E. Goodman. CRC press, to appear.

\bibitem{Hep05} Heppes, A. (2005). New upper bound on the transversal width of T (3)-families of discs. Discrete \& Computational Geometry, 34(3), 463--474.

\bibitem{Jer07} Jer\'onimo Castro, J. (2007). Line transversals to translates of unit discs. Discrete \& Computational Geometry, 37(3), 409--417.

\bibitem{JR11} Jer\'onimo Castro, J., and Rold\'an-Pensado, E. (2011). Line transversals to translates of a convex body. Discrete \& Computational Geometry, 45(2), 329--339.

\bibitem{Joh48} John, F. (1948) Extremum problems with inequalities as subsidiary conditions. In Courant Anniversary Volume (pp. 187--204). Interscience, New York, 1948.

\end{thebibliography}
\end{document}